\newtheorem{theorem}{Theorem}
\theoremstyle{plain}
\newtheorem{acknowledgement}{Acknowledgement}
\newtheorem{axiom}{Axiom}
\newtheorem{conjecture}{Conjecture}
\newtheorem{corollary}{Corollary}
\newtheorem{definition}{Definition}
\newtheorem{example}{Example}
\newtheorem{exercise}{Exercise}
\newtheorem{lemma}{Lemma}
\newtheorem{proposition}{Proposition}
\newtheorem{remark}{Remark}
\numberwithin{equation}{section}
\chardef\@x10\chardef\@xv60
\def\tcitime{
\def\@time{%
  \@minute\time\@hour\@minute\divide\@hour\@xv
  \ifnum\@hour<\@x 0\fi\the\@hour:%
  \multiply\@hour\@xv\advance\@minute-\@hour
  \ifnum\@minute<\@x 0\fi\the\@minute
  }}%
\def\x@hyperref#1#2#3{%
   \catcode`\~ = 12
   \catcode`\$ = 12
   \catcode`\_ = 12
   \catcode`\# = 12
   \catcode`\& = 12
   \y@hyperref{#1}{#2}{#3}%
}
\def\y@hyperref#1#2#3#4{%
   #2\ref{#4}#3
   \catcode`\~ = 13
   \catcode`\$ = 3
   \catcode`\_ = 8
   \catcode`\# = 6
   \catcode`\& = 4
}
\def\QCTOpt[#1]#2{%
  \def\QCTOptB{#1}
  \def\QCTOptA{#2}
}
\def\QCTNOpt#1{%
  \def\QCTOptA{#1}
  \let\QCTOptB\empty
}
\def\Qct{%
  \@ifnextchar[{%
    \QCTOpt}{\QCTNOpt}
}
\def\QCBOpt[#1]#2{%
  \def\QCBOptB{#1}%
  \def\QCBOptA{#2}%
}
\def\QCBNOpt#1{%
  \def\QCBOptA{#1}%
  \let\QCBOptB\empty
}
\def\Qcb{%
  \@ifnextchar[{%
    \QCBOpt}{\QCBNOpt}%
}
\def\PrepCapArgs{%
  \ifx\QCBOptA\empty
    \ifx\QCTOptA\empty
      {}%
    \else
      \ifx\QCTOptB\empty
        {\QCTOptA}%
      \else
        [\QCTOptB]{\QCTOptA}%
      \fi
    \fi
  \else
    \ifx\QCBOptA\empty
      {}%
    \else
      \ifx\QCBOptB\empty
        {\QCBOptA}%
      \else
        [\QCBOptB]{\QCBOptA}%
      \fi
    \fi
  \fi
}
\def\GRAPHICSPS#1{%
 \ifcase\GRAPHICSTYPE
   \special{ps: #1}%
 \or
   \special{language "PS", include "#1"}%
 \fi
}%
\def\graffile#1#2#3#4{%
    \bgroup
       \@inlabelfalse
       \leavevmode
       \@ifundefined{bbl@deactivate}{\def~{\string~}}{\activesoff}%
        \raise -#4 \BOXTHEFRAME{%
           \hbox to #2{\raise #3\hbox to #2{\null #1\hfil}}}%
    \egroup
}%
\def\draftbox#1#2#3#4{%
 \leavevmode\raise -#4 \hbox{%
  \frame{\rlap{\protect\tiny #1}\hbox to #2%
   {\vrule height#3 width\z@ depth\z@\hfil}%
  }%
 }%
}%
\let\nographics=\@msidraft
\newif\ifwasdraft
\def\GRAPHIC#1#2#3#4#5{%
   \ifnum\@msidraft=\@ne\draftbox{#2}{#3}{#4}{#5}%
   \else\graffile{#1}{#3}{#4}{#5}%
   \fi
}
\def\addtoLaTeXparams#1{%
    \edef\LaTeXparams{\LaTeXparams #1}}%
\newif\ifBoxFrame \BoxFramefalse
\newif\ifOverFrame \OverFramefalse
\newif\ifUnderFrame \UnderFramefalse
\def\BOXTHEFRAME#1{%
   \hbox{%
      \ifBoxFrame
         \frame{#1}%
      \else
         {#1}%
      \fi
   }%
}
\def\doFRAMEparams#1{\BoxFramefalse\OverFramefalse\UnderFramefalse\readFRAMEparams#1\end}%
\def\readFRAMEparams#1{%
 \ifx#1\end%
  \let\next=\relax
  \else
  \ifx#1i\dispkind=\z@\fi
  \ifx#1d\dispkind=\@ne\fi
  \ifx#1f\dispkind=\tw@\fi
  \ifx#1t\addtoLaTeXparams{t}\fi
  \ifx#1b\addtoLaTeXparams{b}\fi
  \ifx#1p\addtoLaTeXparams{p}\fi
  \ifx#1h\addtoLaTeXparams{h}\fi
  \ifx#1X\BoxFrametrue\fi
  \ifx#1O\OverFrametrue\fi
  \ifx#1U\UnderFrametrue\fi
  \ifx#1w
    \ifnum\@msidraft=1\wasdrafttrue\else\wasdraftfalse\fi
    \@msidraft=\@ne
  \fi
  \let\next=\readFRAMEparams
  \fi
 \next
 }%
\def\IFRAME#1#2#3#4#5#6{%
      \bgroup
      \let\QCTOptA\empty
      \let\QCTOptB\empty
      \let\QCBOptA\empty
      \let\QCBOptB\empty
      #6%
      \parindent=0pt
      \leftskip=0pt
      \rightskip=0pt
      \setbox0=\hbox{\QCBOptA}%
      \@tempdima=#1\relax
      \ifOverFrame
          \typeout{This is not implemented yet}%
          \show\HELP
      \else
         \ifdim\wd0>\@tempdima
            \advance\@tempdima by \@tempdima
            \ifdim\wd0 >\@tempdima
               \setbox1 =\vbox{%
                  \unskip\hbox to \@tempdima{\hfill\GRAPHIC{#5}{#4}{#1}{#2}{#3}\hfill}%
                  \unskip\hbox to \@tempdima{\parbox[b]{\@tempdima}{\QCBOptA}}%
               }%
               \wd1=\@tempdima
            \else
               \textwidth=\wd0
               \setbox1 =\vbox{%
                 \noindent\hbox to \wd0{\hfill\GRAPHIC{#5}{#4}{#1}{#2}{#3}\hfill}\\%
                 \noindent\hbox{\QCBOptA}%
               }%
               \wd1=\wd0
            \fi
         \else
            \ifdim\wd0>0pt
              \hsize=\@tempdima
              \setbox1=\vbox{%
                \unskip\GRAPHIC{#5}{#4}{#1}{#2}{0pt}%
                \break
                \unskip\hbox to \@tempdima{\hfill \QCBOptA\hfill}%
              }%
              \wd1=\@tempdima
           \else
              \hsize=\@tempdima
              \setbox1=\vbox{%
                \unskip\GRAPHIC{#5}{#4}{#1}{#2}{0pt}%
              }%
              \wd1=\@tempdima
           \fi
         \fi
         \@tempdimb=\ht1
         \advance\@tempdimb by -#2
         \advance\@tempdimb by #3
         \leavevmode
         \raise -\@tempdimb \hbox{\box1}%
      \fi
      \egroup%
}%
\def\DFRAME#1#2#3#4#5{%
  \hfil\break
  \bgroup
     \leftskip\@flushglue
     \rightskip\@flushglue
     \parindent\z@
     \parfillskip\z@skip
     \let\QCTOptA\empty
     \let\QCTOptB\empty
     \let\QCBOptA\empty
     \let\QCBOptB\empty
     \vbox\bgroup
        \ifOverFrame
           #5\QCTOptA\par
        \fi
        \GRAPHIC{#4}{#3}{#1}{#2}{\z@}%
        \ifUnderFrame
           \break#5\QCBOptA
        \fi
     \egroup
   \egroup
   \break
}%
\def\FFRAME#1#2#3#4#5#6#7{%
  \@ifundefined{floatstyle}
    {
     \begin{figure}[#1]%
    }
    {
     \ifx#1h
      \begin{figure}[H]%
     \else
      \begin{figure}[#1]%
     \fi
    }
  \let\QCTOptA\empty
  \let\QCTOptB\empty
  \let\QCBOptA\empty
  \let\QCBOptB\empty
  \ifOverFrame
    #4
    \ifx\QCTOptA\empty
    \else
      \ifx\QCTOptB\empty
        \caption{\QCTOptA}%
      \else
        \caption[\QCTOptB]{\QCTOptA}%
      \fi
    \fi
    \ifUnderFrame\else
      \label{#5}%
    \fi
  \else
    \UnderFrametrue%
  \fi
  \begin{center}\GRAPHIC{#7}{#6}{#2}{#3}{\z@}\end{center}%
  \ifUnderFrame
    #4
    \ifx\QCBOptA\empty
      \caption{}%
    \else
      \ifx\QCBOptB\empty
        \caption{\QCBOptA}%
      \else
        \caption[\QCBOptB]{\QCBOptA}%
      \fi
    \fi
    \label{#5}%
  \fi
  \end{figure}%
 }%
\def\makeactives{
  \catcode`\"=\active
  \catcode`\;=\active
  \catcode`\:=\active
  \catcode`\'=\active
  \catcode`\~=\active
}
   \gdef\activesoff{%
      \def"{\string"}%
      \def;{\string;}%
      \def:{\string:}%
      \def'{\string'}%
      \def~{\string~}%
    }
\def\FRAME#1#2#3#4#5#6#7#8{%
 \bgroup
 \ifnum\@msidraft=\@ne
   \wasdrafttrue
 \else
   \wasdraftfalse%
 \fi
 \def\LaTeXparams{}%
 \dispkind=\z@
 \def\LaTeXparams{}%
 \doFRAMEparams{#1}%
 \ifnum\dispkind=\z@\IFRAME{#2}{#3}{#4}{#7}{#8}{#5}\else
  \ifnum\dispkind=\@ne\DFRAME{#2}{#3}{#7}{#8}{#5}\else
   \ifnum\dispkind=\tw@
    \edef\@tempa{\noexpand\FFRAME{\LaTeXparams}}%
    \@tempa{#2}{#3}{#5}{#6}{#7}{#8}%
    \fi
   \fi
  \fi
  \ifwasdraft\@msidraft=1\else\@msidraft=0\fi{}%
  \egroup
 }%
\def\TEXUX#1{"texux"}
\long\def\QQQ#1#2{%
     \long\expandafter\def\csname#1\endcsname{#2}}%
\long\def\QQA#1#2{}%
\def\QTR#1#2{{\csname#1\endcsname {#2}}}%
\def\EXPAND#1[#2]#3{}%
\def\NOEXPAND#1[#2]#3{}%
\def\LaTeXparent#1{}%
\def\ChildStyles#1{}%
\def\ChildDefaults#1{}%
\def\QTagDef#1#2#3{}%
  \providecommand{\UNICODE}[2][]{\protect\rule{.1in}{.1in}}
  \providecommand{\U}[1]{\protect\rule{.1in}{.1in}}
\def\QQfnmark#1{\footnotemark}
 \def\abstract{%
  \if@twocolumn
   \section*{Abstract (Not appropriate in this style!)}%
   \else \small
   \begin{center}{\bf Abstract\vspace{-.5em}\vspace{\z@}}\end{center}%
   \quotation
   \fi
  }%
   \def\registered{\relax\ifmmode{}\r@gistered
                    \else$\m@th\r@gistered$\fi}%
 \def\r@gistered{^{\ooalign
  {\hfil\raise.07ex\hbox{$\scriptstyle\rm\text{R}$}\hfil\crcr
  \mathhexbox20D}}}}{}%
\newdimen\theight
\def\newfmtname{LaTeX2e}
  \DeclareOldFontCommand{\rm}{\normalfont\rmfamily}{\mathrm}
  \DeclareOldFontCommand{\sf}{\normalfont\sffamily}{\mathsf}
  \DeclareOldFontCommand{\tt}{\normalfont\ttfamily}{\mathtt}
  \DeclareOldFontCommand{\bf}{\normalfont\bfseries}{\mathbf}
  \DeclareOldFontCommand{\it}{\normalfont\itshape}{\mathit}
  \DeclareOldFontCommand{\sl}{\normalfont\slshape}{\@nomath\sl}
  \DeclareOldFontCommand{\sc}{\normalfont\scshape}{\@nomath\sc}
\def\alpha{{\Greekmath 010B}}%
\def\beta{{\Greekmath 010C}}%
\def\gamma{{\Greekmath 010D}}%
\def\delta{{\Greekmath 010E}}%
\def\epsilon{{\Greekmath 010F}}%
\def\zeta{{\Greekmath 0110}}%
\def\eta{{\Greekmath 0111}}%
\def\theta{{\Greekmath 0112}}%
\def\iota{{\Greekmath 0113}}%
\def\kappa{{\Greekmath 0114}}%
\def\lambda{{\Greekmath 0115}}%
\def\mu{{\Greekmath 0116}}%
\def\nu{{\Greekmath 0117}}%
\def\xi{{\Greekmath 0118}}%
\def\pi{{\Greekmath 0119}}%
\def\rho{{\Greekmath 011A}}%
\def\sigma{{\Greekmath 011B}}%
\def\tau{{\Greekmath 011C}}%
\def\upsilon{{\Greekmath 011D}}%
\def\phi{{\Greekmath 011E}}%
\def\chi{{\Greekmath 011F}}%
\def\psi{{\Greekmath 0120}}%
\def\omega{{\Greekmath 0121}}%
\def\varepsilon{{\Greekmath 0122}}%
\def\vartheta{{\Greekmath 0123}}%
\def\varpi{{\Greekmath 0124}}%
\def\varrho{{\Greekmath 0125}}%
\def\varsigma{{\Greekmath 0126}}%
\def\varphi{{\Greekmath 0127}}%
\def\nabla{{\Greekmath 0272}}
\def\FindBoldGroup{%
   {\setbox0=\hbox{$\mathbf{x\global\edef\theboldgroup{\the\mathgroup}}$}}%
}
\def\Greekmath#1#2#3#4{%
    \if@compatibility
        \ifnum\mathgroup=\symbold
           \mathchoice{\mbox{\boldmath$\displaystyle\mathchar"#1#2#3#4$}}%
                      {\mbox{\boldmath$\textstyle\mathchar"#1#2#3#4$}}%
                      {\mbox{\boldmath$\scriptstyle\mathchar"#1#2#3#4$}}%
                      {\mbox{\boldmath$\scriptscriptstyle\mathchar"#1#2#3#4$}}%
        \else
           \mathchar"#1#2#3#4%
        \fi
    \else
        \FindBoldGroup
        \ifnum\mathgroup=\theboldgroup 
           \mathchoice{\mbox{\boldmath$\displaystyle\mathchar"#1#2#3#4$}}%
                      {\mbox{\boldmath$\textstyle\mathchar"#1#2#3#4$}}%
                      {\mbox{\boldmath$\scriptstyle\mathchar"#1#2#3#4$}}%
                      {\mbox{\boldmath$\scriptscriptstyle\mathchar"#1#2#3#4$}}%
        \else
           \mathchar"#1#2#3#4%
        \fi
      \fi}
\newif\ifGreekBold  \GreekBoldfalse
\let\SAVEPBF=\pbf
\def\pbf{\GreekBoldtrue\SAVEPBF}%
  \newcounter{equationnumber}
  \def\mathletters{%
     \addtocounter{equation}{1}
     \edef\@currentlabel{\theequation}%
     \setcounter{equationnumber}{\c@equation}
     \setcounter{equation}{0}%
     \edef\theequation{\@currentlabel\noexpand\alph{equation}}%
  }
    \def\BibTeX{{\rm B\kern-.05em{\sc i\kern-.025em b}\kern-.08em
                 T\kern-.1667em\lower.7ex\hbox{E}\kern-.125emX}}}{}%
\def\AmS{{\protect\usefont{OMS}{cmsy}{m}{n}%
                A\kern-.1667em\lower.5ex\hbox{M}\kern-.125emS}}}{}%
\def\@@eqncr{\let\@tempa\relax
    \ifcase\@eqcnt \def\@tempa{& & &}\or \def\@tempa{& &}%
      \else \def\@tempa{&}\fi
     \@tempa
     \if@eqnsw
        \iftag@
           \@taggnum
        \else
           \@eqnnum\stepcounter{equation}%
        \fi
     \fi
     \global\tag@false
     \global\@eqnswtrue
     \global\@eqcnt\z@\cr}
\def\TCItag{\@ifnextchar*{\@TCItagstar}{\@TCItag}}
\def\@TCItag#1{%
    \global\tag@true
    \global\def\@taggnum{(#1)}}
\def\@TCItagstar*#1{%
    \global\tag@true
    \global\def\@taggnum{#1}}
\def\ExitTCILatex{\makeatother }
\let\DOTSI\relax
\def\RIfM@{\relax\ifmmode}%
\def\FN@{\futurelet\next}%
\def\iint{\DOTSI\intno@\tw@\FN@\ints@}%
\def\iiint{\DOTSI\intno@\thr@@\FN@\ints@}%
\def\iiiint{\DOTSI\intno@4 \FN@\ints@}%
\def\idotsint{\DOTSI\intno@\z@\FN@\ints@}%
\def\ints@{\findlimits@\ints@@}%
\newif\iflimtoken@
\newif\iflimits@
\def\findlimits@{\limtoken@true\ifx\next\limits\limits@true
 \else\ifx\next\nolimits\limits@false\else
 \limtoken@false\ifx\ilimits@\nolimits\limits@false\else
 \ifinner\limits@false\else\limits@true\fi\fi\fi\fi}%
\def\multint@{\int\ifnum\intno@=\z@\intdots@                          
 \else\intkern@\fi                                                    
 \ifnum\intno@>\tw@\int\intkern@\fi                                   
 \ifnum\intno@>\thr@@\int\intkern@\fi                                 
 \int}
\def\multintlimits@{\intop\ifnum\intno@=\z@\intdots@\else\intkern@\fi
 \ifnum\intno@>\tw@\intop\intkern@\fi
 \ifnum\intno@>\thr@@\intop\intkern@\fi\intop}%
\def\intic@{%
    \mathchoice{\hskip.5em}{\hskip.4em}{\hskip.4em}{\hskip.4em}}%
\def\negintic@{\mathchoice
 {\hskip-.5em}{\hskip-.4em}{\hskip-.4em}{\hskip-.4em}}%
\def\ints@@{\iflimtoken@                                              
 \def\ints@@@{\iflimits@\negintic@
   \mathop{\intic@\multintlimits@}\limits                             
  \else\multint@\nolimits\fi                                          
  \eat@}
 \else                                                                
 \def\ints@@@{\iflimits@\negintic@
  \mathop{\intic@\multintlimits@}\limits\else
  \multint@\nolimits\fi}\fi\ints@@@}%
\def\intkern@{\mathchoice{\!\!\!}{\!\!}{\!\!}{\!\!}}%
\def\plaincdots@{\mathinner{\cdotp\cdotp\cdotp}}%
\def\intdots@{\mathchoice{\plaincdots@}%
 {{\cdotp}\mkern1.5mu{\cdotp}\mkern1.5mu{\cdotp}}%
 {{\cdotp}\mkern1mu{\cdotp}\mkern1mu{\cdotp}}%
 {{\cdotp}\mkern1mu{\cdotp}\mkern1mu{\cdotp}}}%
\def\RIfM@{\relax\protect\ifmmode}
\def\text{\RIfM@\expandafter\text@\else\expandafter\mbox\fi}
\let\nfss@text\text
\def\text@#1{\mathchoice
   {\textdef@\displaystyle\f@size{#1}}%
   {\textdef@\textstyle\tf@size{\firstchoice@false #1}}%
   {\textdef@\textstyle\sf@size{\firstchoice@false #1}}%
   {\textdef@\textstyle \ssf@size{\firstchoice@false #1}}%
   \glb@settings}
\def\textdef@#1#2#3{\hbox{{%
                    \everymath{#1}%
                    \let\f@size#2\selectfont
                    #3}}}
\newif\iffirstchoice@
\def\Let@{\relax\iffalse{\fi\let\\=\cr\iffalse}\fi}%
\def\vspace@{\def\vspace##1{\crcr\noalign{\vskip##1\relax}}}%
\def\multilimits@{\bgroup\vspace@\Let@
 \baselineskip\fontdimen10 \scriptfont\tw@
 \advance\baselineskip\fontdimen12 \scriptfont\tw@
 \lineskip\thr@@\fontdimen8 \scriptfont\thr@@
 \lineskiplimit\lineskip
 \vbox\bgroup\ialign\bgroup\hfil$\m@th\scriptstyle{##}$\hfil\crcr}%
\def\Sb{_\multilimits@}%
\def\endSb{\crcr\egroup\egroup\egroup}%
\def\Sp{^\multilimits@}%
\newdimen\ex@
\def\rightarrowfill@#1{$#1\m@th\mathord-\mkern-6mu\cleaders
 \hbox{$#1\mkern-2mu\mathord-\mkern-2mu$}\hfill
 \mkern-6mu\mathord\rightarrow$}%
\def\leftarrowfill@#1{$#1\m@th\mathord\leftarrow\mkern-6mu\cleaders
 \hbox{$#1\mkern-2mu\mathord-\mkern-2mu$}\hfill\mkern-6mu\mathord-$}%
\def\leftrightarrowfill@#1{$#1\m@th\mathord\leftarrow
\mkern-6mu\cleaders
 \hbox{$#1\mkern-2mu\mathord-\mkern-2mu$}\hfill
 \mkern-6mu\mathord\rightarrow$}%
\def\overrightarrow{\mathpalette\overrightarrow@}%
\def\overrightarrow@#1#2{\vbox{\ialign{##\crcr\rightarrowfill@#1\crcr
 \noalign{\kern-\ex@\nointerlineskip}$\m@th\hfil#1#2\hfil$\crcr}}}%
\def\overleftarrow{\mathpalette\overleftarrow@}%
\def\overleftarrow@#1#2{\vbox{\ialign{##\crcr\leftarrowfill@#1\crcr
 \noalign{\kern-\ex@\nointerlineskip}$\m@th\hfil#1#2\hfil$\crcr}}}%
\def\overleftrightarrow{\mathpalette\overleftrightarrow@}%
\def\overleftrightarrow@#1#2{\vbox{\ialign{##\crcr
   \leftrightarrowfill@#1\crcr
 \noalign{\kern-\ex@\nointerlineskip}$\m@th\hfil#1#2\hfil$\crcr}}}%
\def\underrightarrow{\mathpalette\underrightarrow@}%
\def\underrightarrow@#1#2{\vtop{\ialign{##\crcr$\m@th\hfil#1#2\hfil
  $\crcr\noalign{\nointerlineskip}\rightarrowfill@#1\crcr}}}%
\def\underleftarrow{\mathpalette\underleftarrow@}%
\def\underleftarrow@#1#2{\vtop{\ialign{##\crcr$\m@th\hfil#1#2\hfil
  $\crcr\noalign{\nointerlineskip}\leftarrowfill@#1\crcr}}}%
\def\underleftrightarrow{\mathpalette\underleftrightarrow@}%
\def\underleftrightarrow@#1#2{\vtop{\ialign{##\crcr$\m@th
  \hfil#1#2\hfil$\crcr
 \noalign{\nointerlineskip}\leftrightarrowfill@#1\crcr}}}%
\def\qopnamewl@#1{\mathop{\operator@font#1}\nlimits@}
\let\nlimits@\displaylimits
\def\setboxz@h{\setbox\z@\hbox}
\def\varlim@#1#2{\mathop{\vtop{\ialign{##\crcr
 \hfil$#1\m@th\operator@font lim$\hfil\crcr
 \noalign{\nointerlineskip}#2#1\crcr
 \noalign{\nointerlineskip\kern-\ex@}\crcr}}}}
 \def\rightarrowfill@#1{\m@th\setboxz@h{$#1-$}\ht\z@\z@
  $#1\copy\z@\mkern-6mu\cleaders
  \hbox{$#1\mkern-2mu\box\z@\mkern-2mu$}\hfill
  \mkern-6mu\mathord\rightarrow$}
\def\leftarrowfill@#1{\m@th\setboxz@h{$#1-$}\ht\z@\z@
  $#1\mathord\leftarrow\mkern-6mu\cleaders
  \hbox{$#1\mkern-2mu\copy\z@\mkern-2mu$}\hfill
  \mkern-6mu\box\z@$}
\def\projlim{\qopnamewl@{proj\,lim}}
\def\injlim{\qopnamewl@{inj\,lim}}
\def\varinjlim{\mathpalette\varlim@\rightarrowfill@}
\def\varprojlim{\mathpalette\varlim@\leftarrowfill@}
\def\varliminf{\mathpalette\varliminf@{}}
\def\varliminf@#1{\mathop{\underline{\vrule\@depth.2\ex@\@width\z@
   \hbox{$#1\m@th\operator@font lim$}}}}
\def\varlimsup{\mathpalette\varlimsup@{}}
\def\varlimsup@#1{\mathop{\overline
  {\hbox{$#1\m@th\operator@font lim$}}}}
\def\align{\@verbatim \frenchspacing\@vobeyspaces \@alignverbatim
You are using the "align" environment in a style in which it is not defined.}
\let\csname endalign*\endcsname =\endtrivlist
\def\alignat{\@verbatim \frenchspacing\@vobeyspaces \@alignatverbatim
You are using the "alignat" environment in a style in which it is not defined.}
\let\csname endalignat*\endcsname =\endtrivlist
\def\xalignat{\@verbatim \frenchspacing\@vobeyspaces \@xalignatverbatim
You are using the "xalignat" environment in a style in which it is not defined.}
\let\csname endxalignat*\endcsname =\endtrivlist
\def\gather{\@verbatim \frenchspacing\@vobeyspaces \@gatherverbatim
You are using the "gather" environment in a style in which it is not defined.}
\let\csname endgather*\endcsname =\endtrivlist
\def\multiline{\@verbatim \frenchspacing\@vobeyspaces \@multilineverbatim
You are using the "multiline" environment in a style in which it is not defined.}
\let\csname endmultiline*\endcsname =\endtrivlist
\def\arrax{\@verbatim \frenchspacing\@vobeyspaces \@arraxverbatim
You are using a type of "array" construct that is only allowed in AmS-LaTeX.}
\def\tabulax{\@verbatim \frenchspacing\@vobeyspaces \@tabulaxverbatim
You are using a type of "tabular" construct that is only allowed in AmS-LaTeX.}
\let\csname endarrax*\endcsname =\endtrivlist
\let\csname endtabulax*\endcsname =\endtrivlist
 \def\endequation{%
     \ifmmode\ifinner 
      \iftag@
        \addtocounter{equation}{-1} 
        $\hfil
           \displaywidth\linewidth\@taggnum\egroup \endtrivlist
        \global\tag@false
        \global\@ignoretrue
      \else
        $\hfil
           \displaywidth\linewidth\@eqnnum\egroup \endtrivlist
        \global\tag@false
        \global\@ignoretrue
      \fi
     \else
      \iftag@
        \addtocounter{equation}{-1} 
        \eqno \hbox{\@taggnum}
        \global\tag@false%
        $$\global\@ignoretrue
      \else
        \eqno \hbox{\@eqnnum}
        $$\global\@ignoretrue
      \fi
     \fi\fi
 }
 \newif\iftag@ \tag@false
 \def\TCItag{\@ifnextchar*{\@TCItagstar}{\@TCItag}}
 \def\@TCItag#1{%
     \global\tag@true
     \global\def\@taggnum{(#1)}}
 \def\@TCItagstar*#1{%
     \global\tag@true
     \global\def\@taggnum{#1}}
     \def\tag{\@ifnextchar*{\@tagstar}{\@tag}}
     \def\@tag#1{%
         \global\tag@true
         \global\def\@taggnum{(#1)}}
     \def\@tagstar*#1{%
         \global\tag@true
         \global\def\@taggnum{#1}}
\begin{document}
\title{A note on higher dimensional $p$-variation}
\author{Peter Friz and Nicolas Victoir}

\begin{abstract}
We discuss $p$-variation regularity of real-valued functions defined on $%
\left[ 0,T\right] ^{2}$, based on rectangular increments. When $p>1$, there
are two slightly different notions of $p$-variation; both of which are
useful in the context of Gaussian roug paths. Unfortunately, these concepts
were blurred in previous works \cite{FV07, FVforth}; the purpose of this
note is to show that the afore-mentioned notions of $p$-variations are "$%
\varepsilon $-close". In particular, all arguments relevant for Gaussian
rough paths go through with minor notational changes.
\end{abstract}

\maketitle

\section{Higher-dimensional $p$-variation}

Let $T>0$ and $\Delta _{T}=\left\{ \left( s,t\right) :0\leq s\leq t\leq
T\right\} .$We shall regard $\left( \left( a,b\right) ,\left( c,d\right)
\right) \in \Delta _{T}\times \Delta _{T}$ as (closed) \textbf{rectangle} $%
A\subset \left[ 0,T\right] ^{2}$; 
\begin{equation*}
A:=\left( 
\begin{array}{c}
a,b \\ 
c,d%
\end{array}%
\right) :=\left[ a,b\right] \times \left[ c,d\right] ;
\end{equation*}%
if $a=b$ or $c=d$ we call $A$ degenerate. Two rectangles are called \textbf{%
essentially disjoint }if their intersection is empty or degenerate. A 
\textbf{partition} $\Pi $ of a rectangle $R\subset \left[ 0,T\right] ^{2}$
is then a a finite set of essentially disjoint rectangles\textbf{,} whose
union is $R$; the family of all such partitions is denoted by $\mathcal{P}%
\left( R\right) $. Recall that \textbf{rectangular increments} of a function 
$f:\left[ 0,T\right] ^{2}\rightarrow \mathbb{R}$ are defined in terms of $f$
evaluated at the four corner points of $A$,%
\begin{equation*}
f\left( A\right) :=f\left( 
\begin{array}{c}
a,b \\ 
c,d%
\end{array}%
\right) :=f\left( 
\begin{array}{c}
b \\ 
d%
\end{array}%
\right) -f\left( 
\begin{array}{c}
a \\ 
d%
\end{array}%
\right) -f\left( 
\begin{array}{c}
b \\ 
c%
\end{array}%
\right) +f\left( 
\begin{array}{c}
a \\ 
c%
\end{array}%
\right) .
\end{equation*}%
Let us also say that a \textbf{dissection} $D$ of an interval $\left[ a,b%
\right] \subset \left[ 0,T\right] $ is of the form $D=\left( a=t_{0}\leq
t_{1}\leq \dots \leq t_{n}=b\right) $; we write $\mathcal{D}\left( \left[ a,b%
\right] \right) $ for the family of all such dissections.

\begin{definition}
Let $p\in \lbrack 1,\infty )$. A function $f:\left[ 0,T\right]
^{2}\rightarrow \mathbb{R}$ has finite $p$\textbf{-variation} if%
\begin{equation*}
V_{p}\left( f;\left[ s,t\right] \times \left[ u,v\right] \right) :=\left(
\sup_{\substack{ D=\left( t_{i}\right) \in \mathcal{D}\left( \left[ s,t%
\right] \right)  \\ D^{\prime }=\left( t_{j}^{\prime }\right) \in \mathcal{D}%
\left( \left[ u,v\right] \right) }}\sum_{i,j}\left\vert f\left( 
\begin{array}{c}
t_{i},t_{i+1} \\ 
t_{j}^{\prime },t_{j+1}^{\prime }%
\end{array}%
\right) \right\vert ^{p}\right) ^{\frac{1}{p}}<\infty ;
\end{equation*}%
it has finite \textbf{controlled }$p$\textbf{-variation\footnote{%
Our main theorem below will justify this terminology.}} if 
\begin{equation*}
\left\vert f\right\vert _{p\text{-var};\left[ s,t\right] \times \left[ u,v%
\right] }:=\sup_{\Pi \in \mathcal{P}\left( \left[ s,t\right] \times \left[
u,v\right] \right) }\left( \sum_{A\in \Pi }\left\vert f\left( A\right)
\right\vert ^{p}\right) ^{1/p}<\infty .
\end{equation*}
\end{definition}

The difference is that in the first definition (i.e. of $V_{p}$) the $\sup $
is taken over \textbf{grid-like partitions}, 
\begin{equation*}
\left\{ \left( 
\begin{array}{c}
t_{i},t_{i+1} \\ 
t_{j}^{\prime },t_{j+1}^{\prime }%
\end{array}%
\right) :1\leq i\leq n,1\leq j\leq m\right\} ,
\end{equation*}%
based on $D,D^{\prime }$ where $D=\left( t_{i}:1\leq i\leq n\right) \in 
\mathcal{D}\left( \left[ s,t\right] \right) $ and $D^{\prime }=\left(
t_{j}^{\prime }:1\leq j\leq m\right) \in \mathcal{D}\left( \left[ u,v\right]
\right) $. Clearly, not every partition is grid-like (consider e.g. $\left[
0,2\right] ^{2}=\left[ 0,1\right] ^{2}\cup \left[ 1,2\right] \times \left[
0,1\right] \cup \left[ 0,2\right] \times \left[ 1,2\right] $) hence%
\begin{equation*}
V_{p}\left( f;R\right) \leq \left\vert f\right\vert _{p\text{-var};R}.
\end{equation*}%
for every rectangle $R\subset \left[ 0,T\right] ^{2}$.

\begin{definition}
\label{Def2DControl}A map $\omega :\Delta _{T}\times \Delta _{T}\rightarrow
\lbrack 0,\infty )$ is called \textbf{2D\ control} if it is continuous, zero
on degenerate rectangles, and \textbf{super-additive} in the sense that, for
all rectangles $R\subset \left[ 0,T\right] $,%
\begin{equation*}
\sum_{i=1}^{n}\omega \left( R_{i}\right) \leq \omega \left( R\right) ,\text{
whenever }\left\{ R_{i}:1\leq i\leq n\right\} \in \mathcal{P}\left( R\right)
.
\end{equation*}
\end{definition}

Our result is

\begin{theorem}
\label{thm}(i) For any function $f:\left[ 0,T\right] ^{2}\rightarrow \mathbb{%
R}$ and any rectangle $R\subset \left[ 0,T\right] $,%
\begin{equation}
\left\vert f\right\vert _{1\text{-var;}R}=V_{1}\left( f;R\right) .
\label{thmp1}
\end{equation}%
(ii) Let $p\in \lbrack 1,\infty )$ and $\varepsilon >0$. There exists a
constant $c=c\left( p,\varepsilon \right) \geq 1$ such that, for any
function $f:\left[ 0,T\right] ^{2}\rightarrow \mathbb{R}$ and any rectangle $%
R\subset \left[ 0,T\right] $,%
\begin{equation}
\frac{1}{c\left( p,\varepsilon \right) }\left\vert f\right\vert _{\left(
p+\varepsilon \right) \text{-var;}R}\leq V_{p}\left( f;R\right) \leq
\left\vert f\right\vert _{p\text{-var;}R}.  \label{thmp}
\end{equation}%
(iii) If $f:\left[ 0,T\right] ^{2}\rightarrow \mathbb{R}$ is of finite
controlled $p$-variation, then $R\mapsto \left\vert f\right\vert _{p\text{%
-var};R}^{p}$ is super-additive.\newline
(iv) If $f:\left[ 0,T\right] ^{2}\rightarrow \mathbb{R}$ is continuous and
of finite controlled $p$-variation, then $R\mapsto \left\vert f\right\vert
_{p\text{-var};R}^{p}$ is a 2D control. Thus, in particular, there exists a
2D\ control $\omega $ such that%
\begin{equation*}
\forall \text{ rectangles }R\subset \left[ 0,T\right] :\left\vert f\left(
R\right) \right\vert ^{p}\leq \omega \left( R\right)
\end{equation*}
\end{theorem}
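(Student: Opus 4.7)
My plan is to treat the four parts largely independently: parts (i), (iii), and the super-additivity half of (iv) are direct unpackings of the definitions, while the lower inequality in (ii) is the one genuine technical step, and the continuity half of (iv) requires one small additional idea.

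For (i), I start with any $\Pi\in\mathcal{P}(R)$ and extend every horizontal and vertical edge appearing in $\Pi$ across all of $R$ to produce a grid-like refinement $\Pi^{\ast}$. Since rectangular increments are additive in each coordinate separately, each $A\in\Pi$ decomposes as $f(A)=\sum_{B\in\Pi^{\ast},\,B\subset A}f(B)$, and the triangle inequality yields $\sum_{A\in\Pi}|f(A)|\le\sum_{B\in\Pi^{\ast}}|f(B)|\le V_{1}(f;R)$. Taking the sup over $\Pi$ proves $|f|_{1\text{-var};R}\le V_{1}(f;R)$, and the reverse inequality is already recorded in the text. The right-hand inequality of (ii) follows from the same inclusion of grid-like partitions into all partitions.

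The left-hand inequality in (ii) is the main obstacle. Its starting point is a sub-rectangle bound: for any rectangle $A\subset R$, extending the four edges of $A$ across $R$ produces a grid-like partition of $R$ in which $A$ appears as a single cell, so by definition $|f(A)|\le V_{p}(f;R)$ and, more generally, $V_{p}(f;A)\le V_{p}(f;R)$. Given an arbitrary partition $\Pi$, my plan is to refine to the induced grid $\Pi^{\ast}$, group the $A\in\Pi$ dyadically according to the size of $|f(A)|/V_{p}(f;R)$, and control each dyadic layer via the identity $f(A)=\sum_{B\subset A}f(B)$ together with H\"older's inequality applied on the grid cells. The difficulty is that a naive H\"older bound produces a factor depending on $|\Pi|$; the point of the $\varepsilon$-slack is that replacing the exponent $p$ by $p+\varepsilon$ converts this combinatorial growth into a geometric series in the dyadic index that sums to a constant $c(p,\varepsilon)$ independent of $\Pi$. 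This trade-off between the blow-up forced by the refinement and the gain of an $\varepsilon$ in the exponent is what I expect to be the technical heart of the argument.

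Part (iii) is a one-line consequence of the definition: for essentially disjoint $R_{1},\ldots,R_{k}$ with $\bigsqcup_{i}R_{i}=R$ and $\Pi_{i}\in\mathcal{P}(R_{i})$, the concatenation $\bigsqcup_{i}\Pi_{i}$ lies in $\mathcal{P}(R)$, so $\sum_{i}\sum_{A\in\Pi_{i}}|f(A)|^{p}\le|f|_{p\text{-var};R}^{p}$; taking the sup in each $\Pi_{i}$ independently proves super-additivity of $R\mapsto|f|_{p\text{-var};R}^{p}$. For (iv), vanishing on degenerate rectangles is immediate because all their partitions consist of degenerate rectangles on which $f(\cdot)$ vanishes; super-additivity is (iii). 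Continuity of $\omega(R)=|f|_{p\text{-var};R}^{p}$ in the corner points of $R$ follows from super-additivity combined with uniform continuity of $f$: for $R_{n}\downarrow R$ the increment $\omega(R_{n})-\omega(R)$ is dominated by $|f|_{p\text{-var}}^{p}$ over the vanishing rectangular frame $R_{n}\setminus R$, which tends to zero by uniform continuity of $f$ and finiteness of $|f|_{p\text{-var};R}^{p}$. This yields the announced 2D control, and the final inequality $|f(R)|^{p}\le\omega(R)$ is obtained by inserting the trivial partition $\{R\}$ into the defining supremum.
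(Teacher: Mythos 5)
Your treatments of (i), (iii), the trivial inequality in (ii), and the super-additivity and degenerate-rectangle parts of (iv) coincide with the paper's and are fine. The two remaining pieces both contain genuine gaps.

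For the left inequality in (ii) you have stated a plan, not a proof. The dyadic layering would close if you could prove the distributional bound $\#\{k:\ |f(Q_k)|>\lambda\}\le C\,(V_p(f;R)/\lambda)^p$ for an \emph{arbitrary} family of essentially disjoint rectangles $Q_k$; then indeed $\sum_k|f(Q_k)|^{p+\varepsilon}\le C\,V_p^{p+\varepsilon}\sum_j 2^{-j\varepsilon}$. But that weak-type bound is essentially equivalent to the theorem itself and does not follow from ``$f(A)=\sum_{B\subset A}f(B)$ plus H\"older'': H\"older over the grid cells of $A$ produces a factor $N_A^{1-1/p}$ (the number of cells inside $A$) that bears no relation to $|f(A)|/V_p$, and the individual bound $|f(Q_k)|\le V_p(f;R)$ cannot be summed over $k$ precisely because $V_p(f;\cdot)^p$ is \emph{not} super-additive --- that is the paper's fBM counterexample. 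The paper avoids this entirely by a duality argument: it writes $\sum_k|f(Q_k)|^{p+\varepsilon}$ as a discrete integral $\int_{D\times D'}y\,dx$ with $x=f$ and $y$ the step function built from $|f(Q_k)|^{p-1+\varepsilon}\operatorname{sgn}f(Q_k)$, proves a combinatorial lemma bounding $V_q(y)$ by $4\bigl(\sum_k|f(Q_k)|^{p+\varepsilon}\bigr)^{1/q}$ with $q$ the H\"older conjugate of $p+\varepsilon$, and then applies the Young--Towghi two-dimensional maximal inequality; the $\varepsilon$ enters exactly through the Young condition $1/p+1/q>1$. You must either supply the weak-type bound or switch to an argument of this kind.

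For the continuity half of (iv) your key step runs in the wrong direction. Super-additivity gives $\omega(R)+\sum_i\omega(F_i)\le\omega(R_n)$ for the frame pieces $F_i$, i.e.\ a \emph{lower} bound on $\omega(R_n)$; to dominate $\omega(R_n)-\omega(R)$ by the frame contribution you need an approximate \emph{sub}-additivity, which is not free: a near-optimal partition of $R_n$ contains rectangles straddling the boundary of $R$, and splitting such a rectangle strictly decreases $\sum|f(\cdot)|^p$ when $p>1$. The paper's Lemma on adjacent rectangles quantifies this loss by a cross term $p2^{p-1}\omega(\mathrm{big})^{1-1/p}\omega(\mathrm{strip})^{1/p}$ via the mean value theorem and H\"older; this is the additional idea your sketch is missing. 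Moreover, the claim that $\omega$ of the thin frame tends to zero does not follow from uniform continuity of $f$ alone, since $\omega$ of a thin strip is a supremum over partitions into many small rectangles; the paper derives it from super-additivity together with inner continuity, via $\omega([a,a+r]\times[0,T])\le\omega([a,T]\times[0,T])-\omega([a+r,T]\times[0,T])\rightarrow 0$, and inner continuity ($\omega(A^{\circ;r})\rightarrow\omega(A)$) itself requires a separate argument that you have not addressed.
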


As will be seen explicitly in the following example, there exist functions $%
f $ which are of finite $p$-variation but of infinite controlled $p$%
-variation; that is,%
\begin{equation*}
V_{p}\left( f;\left[ 0,T\right] ^{2}\right) <\left\vert f\right\vert _{p%
\text{-var;}\left[ 0,T\right] }=+\infty
\end{equation*}%
which also shows that one cannot take $\varepsilon =0$ in (\ref{thmp}). In
the same example we see that $p$-variation $R\mapsto V_{p}\left( f;R\right)
^{p}$ can fail to be super-additive\footnote{%
... in contrast to controlled $p$-variation $R\mapsto \left\vert
f\right\vert _{p\text{-var};R}^{p}$ which yields a 2D control, cf part (iv)
of the theorem.}.

\begin{example}[{Finite $\left( 1/2H\right) $-variation of fBM covariance, $%
H\in (0,1/2].$ }]
Let $\beta ^{H}$ denote fractional Brownian motion with Hurst parameter $H$;
its covariance is given by%
\begin{equation*}
C^{H}\left( s,t\right) :=\mathbb{E}\left( \beta _{s}^{H}\beta
_{t}^{H}\right) :=\frac{1}{2}\left( t^{2H}+s^{2H}-\left\vert t-s\right\vert
^{2H}\right) ,\,\,\,\,s,t\in \left[ 0,T\right] ^{2},\,H\in (0,1/2].
\end{equation*}%
We show that $C^{H}$ has finite $1/\left( 2H\right) $-variation in 2D sense%
\footnote{%
This is a minor modification of the argument in \cite{FVforth} where it was
assumed that $D=D^{\prime }$.} and more precisely, 
\begin{equation*}
V_{1/\left( 2H\right) }\left( C^{H};\left[ s,t\right] ^{2}\right) \leq
c_{H}\left\vert t-s\right\vert ^{2H}.
\end{equation*}%
(By fractional scaling it would suffice to consider $\left[ s,t\right] =%
\left[ 0,1\right] $ but this does not simplify the argument which follows).
Consider $D=\left( t_{i}\right) ,D^{\prime }=\left( t_{j}^{\prime }\right)
\in \mathcal{D}\left[ s,t\right] $. Clearly,%
\begin{eqnarray}
3^{1-\frac{1}{2H}}\sum_{j}\left\vert E\left[ \beta _{t_{i},t_{i+1}}^{H}\beta
_{t_{j}^{\prime },t_{j+1}^{\prime }}^{H}\right] \right\vert ^{\frac{1}{2H}}
&\leq &3^{1-\frac{1}{2H}}\left\vert E\left[ \beta _{t_{i},t_{i+1}}^{H}\beta
_{\cdot }^{H}\right] \right\vert _{\frac{1}{2H}\text{-var;}\left[ s,t\right]
}^{\frac{1}{2H}}  \notag \\
&\leq &\left\vert E\left[ \beta _{t_{i},t_{i+1}}^{H}\beta _{\cdot }^{H}%
\right] \right\vert _{\frac{1}{2H}\text{-var;}\left[ s,t_{i}\right] }^{\frac{%
1}{2H}}  \label{First} \\
&&+\left\vert E\left[ \beta _{t_{i},t_{i+1}}^{H}\beta _{\cdot }^{H}\right]
\right\vert _{\frac{1}{2H}\text{-var;}\left[ t_{i},t_{i+1}\right] }^{\frac{1%
}{2H}}  \label{Middle} \\
&&+\left\vert E\left[ \beta _{t_{i},t_{i+1}}^{H}\beta _{\cdot }^{H}\right]
\right\vert _{\frac{1}{2H}\text{-var;}\left[ t_{i+1},t\right] }^{\frac{1}{2H}%
},  \label{Last}
\end{eqnarray}%
by super-additivity of (1D!) controls. The middle term (\ref{Middle}) is
estimated by 
\begin{eqnarray*}
\left\vert E\left[ \beta _{t_{i},t_{i+1}}^{H}\beta _{\cdot }^{H}\right]
\right\vert _{\frac{1}{2H}\text{-var;}\left[ t_{i},t_{i+1}\right] }^{\frac{1%
}{2H}} &=&\sup_{\left( s_{k}\right) \in \mathcal{D}\left[ t_{i},t_{i+1}%
\right] }\sum_{k}\left\vert E\left[ \beta _{t_{i},t_{i+1}}^{H}\beta
_{s_{k},s_{k+1}}^{H}\right] \right\vert ^{\frac{1}{2H}} \\
&\leq &c_{H}\left\vert t_{i+1}-t_{i}\right\vert ,
\end{eqnarray*}%
where we used that $\left[ s_{k},s_{k+1}\right] \subset \left[ t_{i},t_{i+1}%
\right] $ implies $\left\vert E\left[ \beta _{t_{i},t_{i+1}}^{H}\beta
_{s_{k},s_{k+1}}^{H}\right] \right\vert \leq c_{H}\left\vert
s_{k+1}-s_{k}\right\vert ^{2H}$. The first term (\ref{First}) and the last
term (\ref{Last}) are estimated by exploiting the fact that disjoint
increments of fractional Brownian motion have negative correlation when $%
H<1/2$ (resp. zero correlation in the Brownian case, $H=1/2$); that is, $%
E\left( \beta _{c,d}^{H}\beta _{a,b}^{H}\right) \leq 0$ whenever $a\leq
b\leq c\leq d$. We can thus estimate (\ref{First}) as follows; 
\begin{eqnarray*}
\left\vert E\left[ \beta _{t_{i},t_{i+1}}^{H}\beta _{\cdot }^{H}\right]
\right\vert _{\frac{1}{2H}\text{-var;}\left[ s,t_{i}\right] }^{\frac{1}{2H}}
&=&\left\vert E\left[ \beta _{t_{i},t_{i+1}}^{H}\beta _{s,t_{i}}^{H}\right]
\right\vert ^{\frac{1}{2H}} \\
&\leq &2^{\frac{1}{2H}-1}\left( \left\vert E\left[ \beta
_{t_{i},t_{i+1}}^{H}\beta _{s,t_{i}}^{H}\right] \right\vert ^{\frac{1}{2H}}+E%
\left[ \left\vert \beta _{t_{i},t_{i+1}}^{H}\right\vert ^{2}\right] ^{\frac{1%
}{2H}}\right) .
\end{eqnarray*}%
The covariance of fractional Brownian motion gives immediately $E\left[
\left\vert \beta _{t_{i},t_{i+1}}^{H}\right\vert ^{2}\right] ^{\frac{1}{2H}%
}=c_{H}\left( t_{i+1}-t_{i}\right) $. On the other hand, $\left[
t_{i},t_{i+1}\right] \subset \left[ s,t_{i+1}\right] $ implies $\left\vert E%
\left[ \beta _{t_{i},t_{i+1}}^{H}\beta _{s,t_{i}}^{H}\right] \right\vert ^{%
\frac{1}{2H}}\leq c_{H}\left\vert t_{i+1}-t_{i}\right\vert $; hence%
\begin{equation*}
\left\vert E\left[ \beta _{t_{i},t_{i+1}}^{H}\beta _{\cdot }^{H}\right]
\right\vert _{\frac{1}{2H}\text{-var;}\left[ s,t_{i}\right] }^{\frac{1}{2H}%
}\leq c_{H}\left\vert t_{i+1}-t_{i}\right\vert \text{.}
\end{equation*}%
As already remarked, the last term is estimated similarly. It only remains
to sum up and to take the supremum over all dissections $D$ and $D^{\prime }$%
.
\end{example}

\begin{example}[Failure of super-addivity of $\left( 1/2H\right) $%
-variation, infinite controlled $\left( 1/2H\right) $-variation of fBM
covariance, $H\in (0,1/2).$ ]
We saw above that%
\begin{equation*}
V_{1/\left( 2H\right) }\left( C^{H};\left[ 0,T\right] ^{2}\right) <\infty .
\end{equation*}%
When $H=1/2$ we deal with Brownian motion and see that its covariance has
finite $1$-variation, which, by (i),(iv) of theorem \ref{thm}, constitues a
2D\ control for $C^{1/2}$. In contrast, we claim that, for $H<1/2$, there
does \underline{not} exist a 2D control for the $1/\left( 2H\right) $%
-variation of $C^{H}$. In fact, the sheer existence of a super-additive map $%
\omega $ (in the sense of definition \ref{Def2DControl}) such that%
\begin{equation*}
\forall \text{ rectangles }R\subset \left[ 0,T\right] :\left\vert
C^{H}\left( R\right) \right\vert ^{1/\left( 2H\right) }\leq \omega \left(
R\right)
\end{equation*}%
leads to a contradiction as follows: assume that such a $\omega $ exists. By
super-addivity,%
\begin{equation*}
\bar{\omega}\left( R\right) :=\left\vert C^{H}\right\vert _{1/\left(
2H\right) \text{-var;}R}^{1/\left( 2H\right) }\leq \omega \left( R\right)
<\infty
\end{equation*}%
and $\bar{\omega}$ is super-additive (in fact, a 2D control) thanks to part
(iv) of the theorem. On the other hand, by fractional scaling there exists $%
C $ such that%
\begin{equation*}
\forall \left( s,t\right) \in \Delta _{T}:\bar{\omega}\left(
[s,t]^{2}\right) =C\left\vert t-s\right\vert .
\end{equation*}%
Let us consider the case $T=2$ and the partition%
\begin{equation*}
\lbrack 0,2]^{2}=[0,1]^{2}\cup \lbrack 1,2]^{2}\cup R\cup R^{\prime }
\end{equation*}%
with $R=[0,1]\times \lbrack 1,2],\,R^{\prime }=[1,2]\times \lbrack 0,1]$.
Super-addivitiy of $\bar{\omega}$ gives%
\begin{eqnarray*}
\bar{\omega}\left( [0,1]^{2}\right) +\bar{\omega}\left( [1,2]^{2}\right) +%
\bar{\omega}\left( R\right) +\bar{\omega}\left( R^{\prime }\right) &\leq &%
\bar{\omega}\left( [0,2]^{2}\right) , \\
C\left( 1-0\right) +C\left( 2-1\right) +\bar{\omega}\left( R\right) +\bar{%
\omega}\left( R^{\prime }\right) &\leq &2C,
\end{eqnarray*}%
hence $\bar{\omega}\left( R\right) =\bar{\omega}\left( R^{\prime }\right) =0$%
, and thus also%
\begin{equation*}
C^{H}\left( R\right) =\mathbb{E}\left[ \left( B_{1}^{H}-B_{0}^{H}\right)
\left( B_{2}^{H}-B_{1}^{H}\right) \right] =0;
\end{equation*}%
which is false for $H\neq 1/2$ and hence the desired contradiction. En
passant, we see that we must have%
\begin{equation*}
\left\vert C^{H}\right\vert _{1/\left( 2H\right) \text{-var;}\left[ 0,T%
\right] ^{2}}=+\infty ;
\end{equation*}%
for otherwise part (iv) of theorem \ref{thm} would yield a 2D control for
the $1/\left( 2H\right) $-variation of $C^{H}$. This also shows that, with $%
f=C^{H}$ and $p=1/\left( 2H\right) $ one has%
\begin{equation*}
V_{p}\left( f;\left[ 0,T\right] ^{2}\right) <\left\vert f\right\vert _{p%
\text{-var;}\left[ 0,T\right] ^{2}}=+\infty .
\end{equation*}
\end{example}

\begin{remark}
The previous examples clearly show the need for theorem \ref{thm};
variational regularity of $C^{H}$ can be controlled upon considering $\left[
\left( 1/2H\right) +\varepsilon \right] $-variation rather than $1/\left(
2H\right) $-variation. In applications, this distinction never matters.\
Existence for Gaussian rough paths for instance, requires $1/\left(
2H\right) <2$ and one can always insert a small enough $\varepsilon .$ It
should also be point out that, by fractional scaling,%
\begin{equation*}
\left\vert C^{H}\right\vert _{\left[ 1/\left( 2H\right) +\varepsilon \right] 
\text{-var;}\left[ s,t\right] ^{2}}\propto \left\vert t-s\right\vert ^{2H};
\end{equation*}%
hence, even in estimates that involve directly that variational regularity
of $C^{H}$, no $\varepsilon $ loss is felt.
\end{remark}

\begin{acknowledgement}
The authors are indebted to Bruce Driver for pointing out, in the most
constructive and gentle way, that $R\mapsto V_{p}\left( f;R\right) ^{p}$ is
not, in general, super-additive.
\end{acknowledgement}

\section{Proof of (i)}

We claim the \textit{controlled} $1$-variation is exactly equal to its $1$%
-variation. More precisely, for all rectangles $R\subset \left[ 0,T\right]
^{2}$ we have 
\begin{equation*}
\left\vert f\right\vert _{1\text{-var;}R}=V_{1}\left( f;R\right) .
\end{equation*}

\begin{proof}
Trivially $V_{1}\left( f;R\right) \leq \left\vert f\right\vert _{1\text{-var;%
}R}$. For the other inequality, assume $\Pi $ is a partition of $R$. It is
obvious that one can find a grid-like partition $\tilde{\Pi}$, based on $%
D\times D^{\prime }$, for sufficiently fine dissections $D,D^{\prime }$,
which \textbf{refines} $\Pi $ in the sense that every $A\in \Pi $ can be
expressed as%
\begin{equation*}
A=\cup _{i}A_{i}\text{ (essentially disjoint), }A_{i}\in \tilde{\Pi}.
\end{equation*}%
From the very definition of rectangular increments, we have $f\left(
A\right) =\sum_{i}f\left( A_{i}\right) $ and it follows that $\left\vert
f\left( A\right) \right\vert \leq \sum_{i}\left\vert f\left( A_{i}\right)
\right\vert $. (Note that this estimate is false if $\left\vert \cdot
\right\vert $ is replaced by $\left\vert \cdot \right\vert ^{p},p>1$.) Hence%
\begin{equation*}
\sum_{A\in \Pi }\left\vert f\left( A\right) \right\vert \leq \sum_{A\in 
\tilde{\Pi}}\left\vert f\left( A\right) \right\vert \leq \left\vert
f\right\vert _{1\text{-var;}R}.
\end{equation*}%
It now suffices to take the supremum over all such $\Pi $ to see that $%
\left\vert f\right\vert _{1\text{-var;}R}\leq V_{1}\left( f;R\right) $.
\end{proof}

\section{Proof of (ii)}

The second inequality $V_{p}\left( f;R\right) \leq \left\vert f\right\vert
_{p\text{-var;}R}$ is trivial. Furthermore, if $V_{p}\left( f;R\right)
=+\infty $ there is nothing to show so we may assume $V_{p}\left( f;R\right)
<+\infty $. We claim that, for all rectangle $R\subset \left[ 0,T\right]
^{2} $, 
\begin{equation*}
\left\vert f\right\vert _{p+\varepsilon \text{-var;}R}\leq c\left(
p,\varepsilon \right) V_{p}\left( f;R\right) \text{.}
\end{equation*}%
For the proof we note first that there is no loss in generality in taking $R=%
\left[ 0,T\right] ^{2}$; an affine reparametrization of each axis will
transform $R$ into $\left[ 0,T\right] ^{2}$, while leaving all rectangular
increments invariant. The plan is to show, for an arbitrary partition $%
\left( Q_{k}\right) \in \mathcal{P}\left( \left[ 0,T\right] ^{2}\right) $,
the estimate%
\begin{equation*}
\left( \sum_{k}\left\vert f\left( Q_{k}\right) \right\vert ^{p+\varepsilon
}\right) ^{\frac{1}{p+\varepsilon }}\leq c\left( p,\varepsilon \right)
V_{p}\left( f;\left[ 0,T\right] ^{2}\right) .
\end{equation*}%
where $c$ depends only on $p,\varepsilon $ for any partition $\left(
Q_{k}\right) \in \mathcal{P}\left( \left[ 0,T\right] ^{2}\right) $. The key
observation is that for a suitable choice of $y,x,D=\left( t_{i}\right)
,D^{\prime }=\left( t_{j}^{\prime }\right) $ we have%
\begin{eqnarray}
\sum_{k}\left\vert f\left( Q_{k}\right) \right\vert ^{p+\varepsilon }
&=&\sum_{k}\left\vert f\left( Q_{k}\right) \right\vert ^{p+\varepsilon
-1}sgn\left( f\left( Q_{k}\right) \right) f\left( Q_{k}\right)
\label{RHSintuit} \\
&=&\sum_{i}\sum_{j}y\left( 
\begin{array}{c}
t_{i} \\ 
t_{j}^{\prime }%
\end{array}%
\right) x\left( 
\begin{array}{c}
t_{i-1},t_{i} \\ 
t_{j-1}^{\prime },t_{j}^{\prime }%
\end{array}%
\right)  \notag \\
&=&:\int_{D\times D^{\prime }}y\,dx.  \notag
\end{eqnarray}%
Indeed, we may take (as in the proof of part (i)) sufficiently fine
dissections $D=\left( t_{i}\right) ,D^{\prime }=\left( t_{j}^{\prime
}\right) \in $ $\mathcal{D}\left[ 0,T\right] $ such that the grid-like
partition based on $D\times D^{\prime }$ refines $\left( Q_{k}\right) $;
followed by setting\footnote{%
The "right-closed" form of $\hat{Q}_{k}$ in the definition of $y$ is tied to
our definition of $\int_{D\times D^{\prime }}y\,dx$ which imposes
"right-end-point-evaluation" of $y$. Recall also that $Q_{k}$ is \textit{%
really} a point in $\left( \left( a,b\right) ,\left( c,d\right) \right) \in $
$\Delta _{T}\times \Delta _{T};$ viewing it as closed rectangle is pure
convention.\ }%
\begin{eqnarray*}
x &:&=f \\
y &:&=\sum_{k}\left\vert f\left( Q_{k}\right) \right\vert ^{p-1+\varepsilon
}sgn\left( f\left( Q_{k}\right) \right) \mathbb{I}_{\hat{Q}_{k}}
\end{eqnarray*}%
where $\hat{Q}_{k}$ is the of the form $(a,b]\times (c,d]$ whenever $Q_{k}=%
\left[ a,b\right] \times \left[ c,d\right] $. Lemma \ref{CrucialLemmaNew}
below, applied with $p+\varepsilon $ instead of $p$, says%
\begin{equation*}
V_{q}\left( y;\left[ 0,T\right] ^{2}\right) \leq 4\left\vert
\sum_{k}\left\vert x\left( Q_{k}\right) \right\vert ^{p+\varepsilon
}\right\vert ^{\frac{1}{q}}
\end{equation*}%
where $q:=1/\left( 1-1/\left( p+\varepsilon \right) \right) $ denotes the H%
\"{o}lder conjugate of $p+\varepsilon $. Since%
\begin{equation*}
\frac{1}{p}+\frac{1}{q}=1+\left( \frac{1}{p}-\frac{1}{p+\varepsilon }\right)
>1,
\end{equation*}%
noting also that $y\left( 0,\cdot \right) =y\left( \cdot ,0\right) =0$, we
can use \textbf{Young-Towghi's maximal inequality} \cite[Thm 2.1.]{To},
included for the reader's convenience as theorem \ref{YTMI} in the appendix,
to obtain the estimate%
\begin{eqnarray*}
\sum_{k}\left\vert f\left( Q_{k}\right) \right\vert ^{p+\varepsilon } &\leq
&c\left( p,\varepsilon \right) V_{q}\left( y;\left[ 0,T\right] ^{2}\right)
V_{p}\left( x;\left[ 0,T\right] ^{2}\right) \\
&\leq &4c\left( p,\varepsilon \right) \left\vert \sum_{k}\left\vert x\left(
Q_{k}\right) \right\vert ^{p+\varepsilon }\right\vert ^{\frac{1}{q}%
}V_{p}\left( x;\left[ 0,T\right] ^{2}\right)
\end{eqnarray*}%
Since $1-\frac{1}{q}=\frac{1}{p+\varepsilon }$ and $x=f$ we see that%
\begin{equation*}
\left( \sum_{k}\left\vert f\left( Q_{k}\right) \right\vert ^{p+\varepsilon
}\right) ^{\frac{1}{p+\varepsilon }}\leq 4c\left( p,\varepsilon \right)
V_{p}\left( f;\left[ 0,T\right] ^{2}\right)
\end{equation*}%
and conclude by taking the supremum over all partitions $\left( Q_{k}\right)
\in \mathcal{P}\left( \left[ 0,T\right] ^{2}\right) $.

\begin{lemma}
\label{CrucialLemmaNew}Fix $p\geq 1$ and write $p^{\prime }$ for the H\"{o}%
lder conjugate i.e. $1/p^{\prime }+1/p=1$. Let $\left( Q_{j}\right) \in 
\mathcal{P}\left( \left[ 0,T\right] ^{2}\right) $ and $y=\sum_{j}\left\vert
x\left( Q_{j}\right) \right\vert ^{p-1}sgn\left( x\left( Q_{j}\right)
\right) \mathbb{I}_{\hat{Q}_{j}}$. Then%
\begin{equation*}
V_{p^{\prime }}\left( y,\left[ 0,T\right] ^{2}\right) \leq \left\vert
y\right\vert _{p^{\prime }\text{-var;}\left[ 0,T\right] ^{2}}\leq 4\left(
\sum_{i}\left\vert x\left( Q_{i}\right) \right\vert ^{p}\right)
^{1/p^{\prime }}.
\end{equation*}
\end{lemma}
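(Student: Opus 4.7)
The first inequality is immediate, since grid-like partitions form a subclass of all partitions. For the second, I would set $c_{k}:=|x(Q_{k})|^{p-1}\operatorname{sgn}(x(Q_{k}))$ so that $|c_{k}|^{p'}=|x(Q_{k})|^{p}$, and aim, for any partition $\Pi$ of $[0,T]^{2}$, at the bound $\sum_{A\in \Pi }|y(A)|^{p'}\leq 4^{p'}\sum_{k}|c_{k}|^{p'}$. The first step is to pass to a grid-aligned computation: choose dissections $D,D'$ of $[0,T]$ refining both the edges of $\Pi$ and those of $(Q_{j})$; using additivity of rectangular increments (the same device as in the proof of (i)), each $A\in \Pi$ then has its four corners at grid points, and the values of $y$ there are each either $0$ (since $y$ vanishes on the axes $\{s=0\}\cup \{t=0\}$) or equal to some $c_{k}$.

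The second step is an iterated one-dimensional $p'$-variation estimate, using the auxiliary 1D fact
\[
|w|_{p'\text{-var;}[0,T]}^{p'}\leq 2^{p'}\sum_{j}|c_{j}|^{p'}\qquad \text{for}\qquad w=\sum_{j}c_{j}\mathbb{I}_{\hat{I}_{j}},
\]
which I would prove by observing that on any dissection the walk $(w(t_{i}))$ visits each distinct value $c_{j}$ at most once and then applying $|c_{j_{l+1}}-c_{j_{l}}|^{p'}\leq 2^{p'-1}(|c_{j_{l}}|^{p'}+|c_{j_{l+1}}|^{p'})$. Writing $y(A)$ as a vertical difference of horizontal differences of $y$ and applying $|a-b|^{p'}\leq 2^{p'-1}(|a|^{p'}+|b|^{p'})$, the 2D sum splits into two contributions; grouping the $A$'s by their common top (resp.\ bottom) edge---which have disjoint horizontal extents by the partition property---bounds each contribution by a sum over slice heights $\rho$ of the 1D $p'$-variation of $y(\cdot ,\rho )$ on $[0,T]$.

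The hard part will be the bookkeeping at the second iteration: a naive application of the 1D bound at each slice height would charge each $|c_{k}|^{p'}$ with a multiplicity equal to the number of $\Pi$-edges lying in $(c_{k},d_{k}]$, which is not uniformly bounded in $\Pi$. To obtain the uniform constant $4^{p'}$ one must telescope the 1D sums a second time along the vertical axis so that each $Q_{k}$ is ultimately charged only through its four corners $(a_{k},c_{k}),(b_{k},c_{k}),(a_{k},d_{k}),(b_{k},d_{k})$; the vanishing of $y$ on the axes ensures that the telescoping endpoint terms collapse. Combining the two ``$\times 2$'' factors from the axis iterations with this cornerwise accounting produces the final constant $4=2\cdot 2$ in the $p'$-th root, and taking the supremum over $\Pi$ then yields the claimed bound.
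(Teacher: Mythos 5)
Your auxiliary one--dimensional lemma is correct (and is essentially the $1$D shadow of the right argument), but the lift to two dimensions has a genuine gap at exactly the point you flag as ``the hard part'', and the proposed fix cannot work. Once you write $y(A)=\left[ y(b,d)-y(a,d)\right] -\left[ y(b,c)-y(a,c)\right]$ and apply $\left\vert u-v\right\vert ^{p^{\prime }}\leq 2^{p^{\prime }-1}(\left\vert u\right\vert ^{p^{\prime }}+\left\vert v\right\vert ^{p^{\prime }})$, you have replaced each four--corner increment by two \emph{non-negative} per-slice quantities, and the cancellation between top and bottom edges is irretrievably lost. Concretely, let the $Q_{k}$ be $N$ vertical strips $[(k-1)T/N,kT/N]\times \lbrack 0,T]$ with alternating values $c_{k}=\pm 1$, and let $\Pi $ be the grid refining these strips by $M$ horizontal cuts. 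Then $\sum_{A\in \Pi }\left\vert y(A)\right\vert ^{p^{\prime }}$ is bounded independently of $M$ (every $A$ away from the horizontal axis has two corners in one strip and two in the adjacent one, which cancel in the $+--+$ sum, so $y(A)=0$ there), yet each of the $\sim 2M$ slice heights contributes a $1$D variation of order $\sum_{k}\left\vert c_{k}\right\vert ^{p^{\prime }}=N$ to your decomposition. Since these contributions are non-negative, no subsequent ``telescoping along the vertical axis'' can remove the factor $M$; the unbounded multiplicity is not a bookkeeping issue but a sign that the top/bottom splitting itself is the wrong first move.

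The argument that works keeps the four corner values of $y(R_{i})$ together. Each corner value is $0$ or some $c_{j}$; if all four corners of $R_{i}$ lie in one $\hat{Q}_{j}$ the increment vanishes, and if exactly two lie in one $\hat{Q}_{j}$ those two cancel in the $+--+$ sum, so only the indices $j$ for which \emph{precisely one} corner of $R_{i}$ lies in $Q_{j}$ contribute. Writing $\phi (i)$ for this set one gets $\left\vert y(R_{i})\right\vert \leq \sum_{j\in \phi (i)}\left\vert c_{j}\right\vert $ with $\#\phi (i)\leq 4$, hence $\left\vert y(R_{i})\right\vert ^{p^{\prime }}\leq 4^{p^{\prime }-1}\sum_{j\in \phi (i)}\left\vert c_{j}\right\vert ^{p^{\prime }}$; interchanging the two sums and noting that for fixed $j$ at most $4$ rectangles $R_{i}$ can have precisely one corner in $Q_{j}$ gives $\sum_{i}\left\vert y(R_{i})\right\vert ^{p^{\prime }}\leq 4^{p^{\prime }}\sum_{j}\left\vert c_{j}\right\vert ^{p^{\prime }}$, using $(p-1)p^{\prime }=p$. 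This is the genuine two--dimensional analogue of your ``each value is visited at most once'' observation; your preliminary grid refinement is harmless but not needed for it.
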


\begin{proof}
Only the second inequality requires a proof. By definition, $\left(
Q_{j}\right) $ forms a partition of $\left[ 0,T\right] ^{2}$ into
essentially disjoint rectangles and we note that $y\left( .,0\right)
=y\left( 0,.\right) =0$. Consider now another partition $\left( R_{i}\right)
\in \mathcal{P}\left( \left[ 0,T\right] ^{2}\right) $. The rectangular
increments of $y$ over $R_{i}$ spells out as "$+--+$ sum" of $y$ evaluated
at the corner points of $R_{i}$. Recall that on each set $\hat{Q}_{j}$ the
function $y~$takes the consant value%
\begin{equation*}
c_{j}:=\left\vert x\left( Q_{j}\right) \right\vert ^{p-1}sgn\left( x\left(
Q_{j}\right) \right) .
\end{equation*}%
Since the corner points of $R_{i}$ are elements of $Q_{j_{1}}\cup
Q_{j_{2}}\cup Q_{j_{3}}\cup Q_{j_{4}}$ for suitable (not necessarily
distinct) indices $j_{1},\dots ,j_{4}$ we clearly have the (crude) estimate%
\begin{equation}
\left\vert y\left( R_{i}\right) \right\vert \leq \sum_{j\in \left\{
j_{1},j_{2},j_{3},j_{4}\right\} }\left\vert c_{j}\right\vert
\label{CrudeEstimate}
\end{equation}%
and, trivially, any $j\notin \left\{ j_{1},j_{2},j_{3},j_{4}\right\} $ is
not required in estimating $\left\vert y\left( R_{i}\right) \right\vert $.
Let us distinguish a few cases where we can do better than in \ref%
{CrudeEstimate}. \newline
\textbf{Case 1:} There exists $j$ such that all four corner points of $R_{i}$
are elements of $Q_{j}$ (equivalently: $\exists j:R_{i}\subset \hat{Q}_{j}$%
). In this case%
\begin{equation*}
y\left( R_{i}\right) =c_{j}-c_{j}-c_{j}+c_{j}=0.
\end{equation*}%
In particular, such an index $j$ is not required to estimate $\left\vert
y\left( R_{i}\right) \right\vert $.$\newline
$\textbf{Case 2:} There exists $j$ such that precisely two corner points%
\footnote{%
The case that three corner points of $R_{i}$ are elements of $Q_{j}$ already
implies (rectangles!) that all four corner points of $R_{i}$ are elements of 
$Q_{j}$. This is covered by Case 1.} of $R_{i}$ are elements of $Q_{j}$. It
follows that the corner points of $R_{i}$ are elements of $Q_{j_{1}}\cup
Q_{j_{2}}\cup Q_{j}$ for suitable (not necessarily distinct) indices $%
j_{1},j_{2}$. Note however that $j\notin \left\{ j_{1},j_{2}\right\} $. In
this case%
\begin{equation*}
y\left( R_{i}\right) =c_{j_{1}}-c_{j_{2}}-c_{j}+c_{j}=c_{j_{1}}-c_{j_{2}}.
\end{equation*}%
In general, this quantity is non-zero (although it is zero when $j_{1}=j_{2}$%
, which is tantamount to say that $R_{i}\subset Q_{j_{1}}\cup Q_{j}$). Even
so, we note that 
\begin{equation*}
\left\vert y\left( R_{i}\right) \right\vert \leq \left\vert
c_{j_{1}}\right\vert +\left\vert c_{j_{2}}\right\vert
\end{equation*}%
and again the index $j$ is not required in order to estimate $\left\vert
y\left( R_{i}\right) \right\vert $.\newline
\textbf{Case 3:} There exists $j$ such that precisely one corner point of $%
R_{i}$ is an element of $Q_{j}$. In this case, for suitable (not necessarily
distinct) indices $j_{1},j_{2},j_{3}$ with $j\notin \left\{
j_{1},j_{2},j_{3}\right\} $ 
\begin{equation*}
\left\vert y\left( R_{i}\right) \right\vert =\left\vert
c_{j_{1}}-c_{j_{2}}-c_{j_{3}}+c_{j}\right\vert \leq \left\vert
c_{j_{1}}-c_{j_{2}}-c_{j_{3}}\right\vert +\left\vert c_{j}\right\vert .
\end{equation*}%
In this case, the index $j$ is required to estimate $\left\vert y\left(
R_{i}\right) \right\vert $. (There is still the possibily for cancellation
between the other terms. If $j_{2}=j_{3}$ for instance, then $\left\vert
y\left( R_{i}\right) \right\vert \leq \left\vert c_{j_{1}}\right\vert
+\left\vert c_{j}\right\vert $ and indices $j_{2},j_{3}$ are not required;
this corresponds precisely to case 2 applied to $Q_{j_{2}}$. Another
possiblility is that $\left\{ j_{1},j_{2},j_{3}\right\} $ are all distinct
in which case $\left\vert y\left( R_{i}\right) \right\vert \leq \left\vert
c_{j_{1}}\right\vert +\left\vert c_{j_{2}}\right\vert +\left\vert
c_{j_{3}}\right\vert +\left\vert c_{j}\right\vert $ is the best estimate and
all four indices $j_{1},j_{2},j_{3},j$ are needed in the estimate.\newline
The moral of this case-by-case consideration is that only those $j\in \phi
\left( i\right) $ where%
\begin{equation*}
\phi \left( i\right) :=\left\{ j:\text{ precisely one corner point of }R_{i}%
\text{ is an element of }Q_{j}\right\}
\end{equation*}%
are required in estimating $\left\vert y\left( R_{i}\right) \right\vert $;
more precisely,%
\begin{equation*}
\left\vert y\left( R_{i}\right) \right\vert \leq \sum_{j\in \phi \left(
i\right) }\left\vert c_{j}\right\vert .
\end{equation*}%
Since rectangles (here: $R_{i}$)\ have four corner points it is clear that $%
\#\phi \left( i\right) \leq 4$ where $\#$ denotes the cardinality of a set.
Hence%
\begin{equation*}
\left\vert y\left( R_{i}\right) \right\vert ^{p^{\prime }}\leq 4^{p^{\prime
}-1}\sum_{j\in \phi \left( i\right) }\left\vert c_{j}\right\vert ^{p^{\prime
}}\equiv 4^{p^{\prime }-1}\sum_{j}\phi _{i,j}\left\vert c_{j}\right\vert
^{p^{\prime }}
\end{equation*}%
where we introdudced the matrix $\phi _{i,j}$ with value $1$ if $j\in \phi
\left( i\right) $ and zero else. This allows us to write%
\begin{eqnarray*}
\sum_{i}\left\vert y\left( R_{i}\right) \right\vert ^{p^{\prime }} &\leq
&4^{p^{\prime }-1}\sum_{i}\sum_{j}\phi _{i,j}\left\vert c_{j}\right\vert
^{p^{\prime }} \\
&=&4^{p^{\prime }-1}\sum_{j}\left\vert c_{j}\right\vert ^{p^{\prime
}}\sum_{i}\phi _{i,j}.
\end{eqnarray*}%
Consider now, for fixed $j$, the number of rectangles $R_{i}$ which have
precisely one corner point inside $Q_{j}$. Obviously, there can be a most $4$
rectangles with this property. Hence%
\begin{equation*}
\sum_{i}\phi _{i,j}=\#\left\{ i:j\in \phi \left( i\right) \right\} \leq 4.
\end{equation*}%
It follows that%
\begin{equation*}
\sum_{i}\left\vert y\left( R_{i}\right) \right\vert ^{p^{\prime }}\leq
4^{p^{\prime }}\sum_{j}\left\vert c_{j}\right\vert ^{p^{\prime
}}=4^{p^{\prime }}\sum_{j}\left\vert x\left( Q_{j}\right) \right\vert
^{\left( p-1\right) p^{\prime }}=4^{p^{\prime }}\sum_{j}\left\vert x\left(
Q_{j}\right) \right\vert ^{p},
\end{equation*}%
where we used that $\left( p-1\right) p^{\prime }=p$. Upon taking the
supremum over all partitions $\left( R_{i}\right) $ of $\left[ 0,T\right]
^{2}$ we obtain%
\begin{equation*}
\left\vert y\right\vert _{p^{\prime }\text{-var;}\left[ 0,T\right]
^{2}}^{p^{\prime }}\leq 4^{p^{\prime }}\sum_{i}\left\vert x\left(
Q_{i}\right) \right\vert ^{p},
\end{equation*}%
as desired. The proof is finished.\ 
\end{proof}

\section{Proof of (iii)}

The claim is \underline{super-additivity} of%
\begin{equation*}
R\mapsto \sup_{\Pi \in \mathcal{P}\left( R\right) }\sum_{A\in \Pi
}\left\vert f\left( A\right) \right\vert ^{p}.
\end{equation*}
Assume $\left\{ R_{i}:1\leq i\leq n\right\} $ constitutes a partition of $R$%
. Assume also that $\Pi _{i}$ is a partition of $R_{i}$ for every $1\leq
i\leq n$. Clearly, $\Pi :=\cup _{i=1}^{n}\Pi _{i}$ is a partition of $R$ and
hence%
\begin{equation*}
\sum_{i=1}^{n}\sum_{A\in \Pi _{i}}\left\vert f\left( A\right) \right\vert
^{p}=\sum_{A\in \Pi }\left\vert f\left( A\right) \right\vert ^{p}\leq \omega
\left( R\right)
\end{equation*}%
Now taking the supremum over each of the $\Pi _{i}$ gives the desired result.%
\newline

\section{Proof of (iv)}

The assumption is that $f:\left[ 0,T\right] ^{2}\rightarrow \mathbb{R}$ is
continuous and of finite controlled $p$-variation. From (iii),%
\begin{equation*}
\omega \left( R\right) :=\left\vert f\right\vert _{p\text{-var;}R}^{p}
\end{equation*}%
is super-additive as function of $R$. It is also clear that $\omega $ is
zero on degenerate rectangles. It remains to be seen that $\omega :$ $\Delta
_{T}\times \Delta _{T}\rightarrow \lbrack 0,\infty )$ is continuous.

\begin{lemma}
\label{almostSubAdditive}Consider the two (adjacent) rectangles $\left[ a,b%
\right] \times \left[ s,t\right] $ and $\left[ a,b\right] \times \left[ t,u%
\right] $ in $\left[ 0,T\right] ^{2}.$Then,%
\begin{eqnarray*}
\omega \left( 
\begin{array}{c}
a,b \\ 
s,u%
\end{array}%
\right) &\leq &\omega \left( 
\begin{array}{c}
a,b \\ 
s,t%
\end{array}%
\right) +\omega \left( 
\begin{array}{c}
a,b \\ 
t,u%
\end{array}%
\right) \\
&&+p2^{p-1}\omega \left( 
\begin{array}{c}
a,b \\ 
s,u%
\end{array}%
\right) ^{1-1/p}\min \left\{ \omega \left( 
\begin{array}{c}
a,b \\ 
t,u%
\end{array}%
\right) ,\omega \left( 
\begin{array}{c}
a,b \\ 
s,t%
\end{array}%
\right) \right\} ^{1/p}.
\end{eqnarray*}
\end{lemma}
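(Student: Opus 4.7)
The plan is to fix an arbitrary partition $\Pi \in \mathcal{P}([a,b]\times[s,u])$ and split it along the horizontal line $y=t$. I would decompose $\Pi = \Pi_1 \sqcup \Pi_2 \sqcup \Pi_c$, where $\Pi_1$ collects the rectangles contained in $[a,b]\times[s,t]$, $\Pi_2$ those contained in $[a,b]\times[t,u]$, and $\Pi_c$ the rectangles that cross the line $y=t$. For each crossing rectangle $A = [\alpha,\beta]\times[\gamma,\delta] \in \Pi_c$, split it into $A' = [\alpha,\beta]\times[\gamma,t]$ and $A'' = [\alpha,\beta]\times[t,\delta]$; from the definition of rectangular increments one immediately has $f(A) = f(A') + f(A'')$. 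The key book-keeping observation is that $\Pi_1 \cup \{A' : A\in\Pi_c\}$ is an honest partition of $[a,b]\times[s,t]$, and similarly $\Pi_2 \cup \{A'' : A\in\Pi_c\}$ is a partition of $[a,b]\times[t,u]$, so their $|\cdot|^p$-sums are bounded by $\omega\binom{a,b}{s,t}$ and $\omega\binom{a,b}{t,u}$ respectively.

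Next I would invoke the elementary inequality
$$(x+y)^p \leq x^p + y^p + p(x+y)^{p-1}\min(x,y), \qquad x,y \geq 0,\ p \geq 1,$$
which follows from the mean value theorem applied to $\xi\mapsto\xi^p$ on $[x,x+y]$ (and again with the roles of $x$ and $y$ swapped, keeping the sharper bound). Taking $x=|f(A')|$, $y=|f(A'')|$ for $A\in\Pi_c$ and summing $|f(A)|^p$ over the whole of $\Pi$, the "square" contributions $|f(A')|^p + |f(A'')|^p$ combine with the sums over $\Pi_1$ and $\Pi_2$ to give exactly the first two summands $\omega\binom{a,b}{s,t}+\omega\binom{a,b}{t,u}$ on the right-hand side of the claimed inequality.

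The residual cross-term is
$$p\sum_{A\in\Pi_c}(|f(A')|+|f(A'')|)^{p-1}\min(|f(A')|,|f(A'')|),$$
which I would attack with H\"older's inequality at conjugate exponents $p/(p-1)$ and $p$. For the first factor, $(|f(A')|+|f(A'')|)^p \leq 2^{p-1}(|f(A')|^p+|f(A'')|^p)$ combined with the partition observation above yields $\sum(|f(A')|+|f(A'')|)^p \leq 2^{p-1}\omega\binom{a,b}{s,u}$. For the second factor, $\min(x,y)^p=\min(x^p,y^p)$ together with $\sum|f(A')|^p \leq \omega\binom{a,b}{s,t}$ and $\sum|f(A'')|^p \leq \omega\binom{a,b}{t,u}$ gives a bound by $\min\{\omega\binom{a,b}{s,t},\omega\binom{a,b}{t,u}\}$.

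Assembling these bounds produces a constant $p\cdot 2^{(p-1)^2/p}$, which is majorised by $p\cdot 2^{p-1}$ since $(p-1)^2/p\leq p-1$ for $p\geq 1$. Taking the supremum over $\Pi$ then finishes the argument. The main obstacle is not any single analytic inequality but the careful accounting of how each crossing rectangle simultaneously contributes to the lower, upper, and total partition sums; once this book-keeping is pinned down, the remaining estimates are a combination of H\"older and the scalar inequality above.
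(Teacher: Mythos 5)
Your proof is correct and follows essentially the same route as the paper's: split the partition along the line $y=t$ into $\Pi_l\cup\Pi_m\cup\Pi_r$, split each crossing rectangle into its two halves, observe that the halves complete the lower and upper partitions, and control the resulting cross-term by a mean-value-theorem inequality followed by H\"older. The only difference is in the execution of the H\"older step: the paper applies it to the quantities $\omega\left(\left[\tau_j,\tau_{j+1}\right]\times\left[s,u\right]\right)$ attached to the (horizontally essentially disjoint) crossing strips and then invokes super-additivity over those strips, whereas you apply it directly to the increments $\left\vert f(A')\right\vert,\left\vert f(A'')\right\vert$ together with $(x+y)^p\le 2^{p-1}(x^p+y^p)$ and the partition bound --- both variants land on the same constant $p2^{p-1}$.
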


\begin{proof}
From the very definition of $\omega \left( \left[ a,b\right] \times \left[
s,u\right] \right) $, it follows that for every fixed $\varepsilon >0$,
there exists a rectangular (not necessarily grid-like) partition of $\left[
a,b\right] \times \left[ s,u\right] $, say $\Pi \in \mathcal{P}\left( \left[
a,b\right] \times \left[ s,u\right] \right) $, such that%
\begin{equation*}
\sum_{R\in \Pi }\left\vert f\left( R\right) \right\vert ^{p}>\omega \left( 
\begin{array}{c}
a,b \\ 
s,u%
\end{array}%
\right) -\varepsilon .
\end{equation*}%
Let us divide $\Pi $ in $\Pi _{l}\cup \Pi _{m}\cup \Pi _{r}$ where $\Pi _{l}$
contains all $R\in \Pi $ such that $R\subset \left[ a,b\right] \times \left[
s,t\right] $, $\Pi _{r}$ contains all $R\in \Pi :R\subset \left[ a,b\right]
\times \left[ t,u\right] $ and $\Pi _{m}$ contains all remaining rectangles
of $\Pi $ (i.e. the one such that their interior intersect with the line $%
\left[ a,b\right] \times \left[ t,t\right] $. It follows that%
\begin{equation*}
\sum_{R\in \Pi _{l}}\left\vert f\left( R\right) \right\vert ^{p}+\sum_{R\in
\Pi _{m}}\left\vert f\left( R\right) \right\vert ^{p}+\sum_{R\in \Pi
_{r}}\left\vert f\left( R\right) \right\vert ^{p}>\omega \left( 
\begin{array}{c}
a,b \\ 
s,u%
\end{array}%
\right) -\varepsilon
\end{equation*}%
Every $R\in \Pi _{m}$ can be split into (essentially disjoint) rectangles $%
R_{1}\subset \left[ a,b\right] \times \left[ s,t\right] $ and $R_{2}\subset %
\left[ a,b\right] \times \left[ t,u\right] $. Set $\Pi _{m}^{1}=\left\{
R_{1}:R_{1}\in \Pi _{m}\right\} $ and $\Pi _{m}^{2}$ similarly. Note that $%
\Pi _{l}\cup \Pi _{m}^{1}\in \mathcal{P}\left( \left[ a,b\right] \times %
\left[ s,t\right] \right) $ and $\Pi _{m}^{2}\cup \Pi _{r}\in \mathcal{P}%
\left( \left[ a,b\right] \times \left[ t,u\right] \right) $. Then, with%
\begin{equation*}
\Delta :=\sum_{R\in \Pi _{m}}\left[ \left\vert f\left( R\right) \right\vert
^{p}-\left\vert f\left( R_{1}\right) \right\vert ^{p}-\left\vert f\left(
R_{2}\right) \right\vert ^{p}\right]
\end{equation*}%
we have%
\begin{equation*}
\sum_{R\in \Pi _{l}\cup \Pi _{m}^{1}}\left\vert f\left( R\right) \right\vert
^{p}+\sum_{R\in \Pi _{m}^{2}\cup \Pi _{r}}\left\vert f\left( R\right)
\right\vert ^{p}+\Delta >\omega \left( \left[ a,b\right] \times \left[ s,u%
\right] \right) -\varepsilon
\end{equation*}%
and hence $,$we have%
\begin{equation*}
\omega \left( 
\begin{array}{c}
a,b \\ 
s,t%
\end{array}%
\right) +\omega \left( 
\begin{array}{c}
a,b \\ 
t,u%
\end{array}%
\right) +\Delta >\omega \left( 
\begin{array}{c}
a,b \\ 
s,u%
\end{array}%
\right) -\varepsilon \text{.}
\end{equation*}%
We now bound $\Delta .$ As $f(R)=f\left( R_{1}\right) +f\left( R_{2}\right) $%
,%
\begin{eqnarray*}
\Delta &=&\sum_{R^{j}\in \Pi _{m}}\left\vert f\left( R_{1}^{j}\right)
+f\left( R_{2}^{j}\right) \right\vert ^{p}-\left\vert f\left(
R_{1}^{j}\right) \right\vert ^{p}-\left\vert f\left( R_{2}^{j}\right)
\right\vert ^{p} \\
&\leq &\sum_{R\in \Pi _{m}}\left( \left\vert f\left( R_{1}^{j}\right)
\right\vert +\left\vert f\left( R_{2}^{j}\right) \right\vert \right)
^{p}-\left\vert f\left( R_{1}^{j}\right) \right\vert ^{p}-\left\vert f\left(
R_{2}^{j}\right) \right\vert ^{p}. \\
&\leq &\sum_{R\in \Pi _{m}}\left( \left\vert f\left( R_{1}^{j}\right)
\right\vert +\left\vert f\left( R_{2}^{j}\right) \right\vert \right)
^{p}-\left\vert f\left( R_{1}^{j}\right) \right\vert ^{p}
\end{eqnarray*}%
If $R^{j}=[\tau _{j},\tau _{j+1}]\times \left[ c,d\right] ,$ define $%
R_{3}^{j}=[\tau _{j},\tau _{j+1}]\times \left[ s,u\right] .$ Then, quite
obviously, we have $\left\vert f\left( R_{1}^{j}\right) \right\vert ^{p}\leq
\omega \left( R_{3}^{j}\right) $ and $\left\vert f\left( R_{2}^{j}\right)
\right\vert ^{p}\leq \omega \left( R_{3}^{j}\right) .$ By the mean value
theorem, there exists $\theta \in \left[ 0,1\right] $ such that%
\begin{eqnarray*}
&&\left( \left\vert f\left( R_{1}^{j}\right) \right\vert +\left\vert f\left(
R_{2}^{j}\right) \right\vert \right) ^{p}-\left\vert f\left(
R_{1}^{j}\right) \right\vert ^{p} \\
&=&p\left( \left\vert f\left( R_{1}^{j}\right) \right\vert +\theta
\left\vert f\left( R_{2}^{j}\right) \right\vert \right) ^{p-1}\left\vert
f\left( R_{2}^{j}\right) \right\vert \\
&\leq &p2^{p-1}\omega \left( R_{3}^{j}\right) ^{1-1/p}\left\vert f\left(
R_{2}^{j}\right) \right\vert \\
&\leq &p2^{p-1}\omega \left( 
\begin{array}{c}
\tau _{j},\tau _{j+1} \\ 
s,u%
\end{array}%
\right) ^{1-1/p}\omega \left( 
\begin{array}{c}
\tau _{j},\tau _{j+1} \\ 
t,u%
\end{array}%
\right) ^{1/p}.
\end{eqnarray*}%
Hence, summing over $j$, and using H\"{o}lder inequality%
\begin{eqnarray*}
\Delta &\leq &p2^{p-1}\sum_{j}\omega \left( 
\begin{array}{c}
\tau _{j},\tau _{j+1} \\ 
s,u%
\end{array}%
\right) ^{p-1}\omega \left( 
\begin{array}{c}
\tau _{j},\tau _{j+1} \\ 
t,u%
\end{array}%
\right) \\
&\leq &p2^{p-1}\left( \sum_{j}\omega \left( 
\begin{array}{c}
\tau _{j},\tau _{j+1} \\ 
s,u%
\end{array}%
\right) \right) ^{1-1/p}\left( \sum_{j}\omega \left( 
\begin{array}{c}
\tau _{j},\tau _{j+1} \\ 
t,u%
\end{array}%
\right) \right) ^{1/p} \\
&\leq &p2^{p-1}\omega \left( 
\begin{array}{c}
a,b \\ 
s,u%
\end{array}%
\right) ^{1-1/p}\omega \left( 
\begin{array}{c}
a,b \\ 
t,u%
\end{array}%
\right) ^{1/p}
\end{eqnarray*}%
Interchanging the roles of $R_{1}$and $R_{2},$ we also obtain that 
\begin{equation*}
\Delta \leq p2^{p-1}\omega \left( 
\begin{array}{c}
a,b \\ 
s,u%
\end{array}%
\right) ^{1-1/p}\omega \left( 
\begin{array}{c}
a,b \\ 
t,u%
\end{array}%
\right) ^{1/p},
\end{equation*}%
which concludes the proof.
\end{proof}

\underline{Continuity:} $\omega $ is a map from $\Delta _{T}\times \Delta
_{T}\rightarrow \lbrack 0,\infty )$; the identification of points $\left(
\left( a_{1},a_{2}\right) ,\left( a_{3},a_{4}\right) \right) \in \Delta
_{T}\times \Delta _{T}$ with rectangles in $\left[ 0,T\right] ^{2}$ of the
form $A=\left( 
\begin{array}{c}
a_{1},a_{2} \\ 
a_{3},a_{4}%
\end{array}%
\right) =$ $\left[ a_{1},a_{2}\right] \times \left[ a_{3},a_{4}\right] $ is
pure convention. If $A$ is non-degenerate (i.e. $a_{1}<a_{2},a_{3}<a_{4}$)
and $\left\vert h\right\vert =\max_{i=1}^{4}\left\vert h_{i}\right\vert $
sufficiently small then%
\begin{equation*}
A^{h}:=\left( 
\begin{array}{c}
\left( a_{1}+h_{1}\right) \vee 0,\left( a_{2}+h_{2}\right) \wedge T \\ 
\left( a_{3}+h_{3}\right) \vee 0,\left( a_{4}+h_{4}\right) \wedge T%
\end{array}%
\right)
\end{equation*}
is again a non-degenerate rectangle in $\left[ 0,T\right] ^{2}$. We can then
set for $r>0$, sufficiently small,%
\begin{equation*}
A^{\circ ;r}:=A^{\left( r,-r,r,-r\right) },\,\,\,\bar{A}^{r}:=A^{\left(
-r,r,-r,r\right) }
\end{equation*}%
and note that, whenever $\left\vert h\right\vert $ is small enough to have $%
A^{\circ ;\left\vert h\right\vert }$ well-defined$,$ 
\begin{eqnarray}
A^{\circ ;\left\vert h\right\vert } &\subset &A\subset \bar{A}^{\left\vert
h\right\vert },  \label{AhIncl1} \\
A^{\circ ;\left\vert h\right\vert } &\subset &A^{h}\subset \bar{A}%
^{\left\vert h\right\vert }.  \label{AhIncl2}
\end{eqnarray}

The above definition of $A^{h}$ (and $A^{\circ ;r},\,\bar{A}^{r}$) is easily
extended to degenerate $A$, such that the inclusions (\ref{AhIncl1}),(\ref%
{AhIncl2}) remain valid: For instance, in the case $a_{1}=a_{2}$ we would
replace the first line in the definition of $A^{h}$ by 
\begin{equation*}
\begin{array}{c}
\left( a_{1}+h_{1}\right) \vee 0,\left( a_{2}+h_{2}\right) \wedge T\text{ if 
}h_{1}\leq 0\leq h_{2} \\ 
\left( a_{1}+h_{1}\right) \vee 0,a_{2}\text{ if }h_{1},h_{2}\leq 0 \\ 
a_{1},\left( a_{2}+h_{2}\right) \wedge T\text{ if }h_{1},h_{2}\geq 0 \\ 
a_{1},a_{2}\text{ if }h_{1}\geq 0\geq h_{2}%
\end{array}%
\end{equation*}%
and similarly in the case $a_{3}=a_{4}$. We will prove that, for any
rectangle $A\subset \left[ 0,T\right] ^{2}$,%
\begin{equation*}
\omega \left( A^{h}\right) \rightarrow \omega \left( A\right) \text{ as }%
\left\vert h\right\vert \downarrow 0\text{.}
\end{equation*}%
This end we can and will consider $\left\vert h\right\vert $ is small enough
to have $A^{\circ ;\left\vert h\right\vert }$ (and thus $A^{h},\bar{A}%
^{\left\vert h\right\vert }$) well-defined. By monotonicity of $\omega $, it
follows that%
\begin{equation*}
\omega \left( A^{\circ ;\left\vert h\right\vert }\right) \leq \omega \left(
A^{h}\right) \leq \omega \left( \bar{A}^{\left\vert h\right\vert }\right)
\end{equation*}%
and the limits,%
\begin{eqnarray}
\omega ^{\circ }\left( A\right) &:&=\lim_{r\downarrow 0}\omega \left(
A^{\circ ;r}\right) \leq \omega \left( A\right) ,  \label{oCircLTo} \\
\,\,\,\bar{\omega}\left( A\right) &:&=\lim_{r\downarrow 0}\omega \left( \bar{%
A}^{r}\right) \geq \omega \left( A\right) ,  \notag
\end{eqnarray}%
exist since $\omega \left( A^{\circ ;r}\right) $ [resp. $\omega \left( \bar{A%
}^{r}\right) $] are bounded from above [resp. below] and increasing [resp.
decreasing] as $r\downarrow 0$. It follows that%
\begin{equation*}
\omega ^{\circ }\left( A\right) \leq \underset{\left\vert h\right\vert
\downarrow 0}{\underline{\lim }}\omega \left( A^{h}\right) \leq \underset{%
\left\vert h\right\vert \downarrow 0}{\overline{\lim }}\omega \left(
A^{h}\right) \leq \bar{\omega}\left( A\right) .
\end{equation*}%
The goal is now to show that $\omega ^{\circ }\left( A\right) =\omega \left(
A\right) $ ("inner continuity") and $\bar{\omega}\left( A\right) =\omega
\left( A\right) $ ("outer continuity") since this implies that\ $\underline{%
\lim }\omega \left( A^{h}\right) =\overline{\lim }\omega \left( A^{h}\right)
=\omega \left( A\right) $, which is what we want.\newline
\underline{Inner continuity\textbf{:}}\textbf{\ }We first show that $\omega
^{\circ }$ is super-additive in the sense of definition \ref{Def2DControl}.
To this end, consider $\left\{ R_{i}\right\} \in \mathcal{P}\left( R\right) $%
, some rectangle $R\subset \left[ 0,T\right] ^{2}$. For $r$ small enough,
the rectangles 
\begin{equation*}
\left\{ R_{i}^{0,r}\right\}
\end{equation*}%
are well-defined and essentially disjoint. They can be completed to a
partition of $R^{0,r}$ and hence, by super-additivity of $\omega ,$%
\begin{equation*}
\sum_{i}\omega \left( R_{i}^{0,r}\right) \leq \omega \left( R^{0,r}\right) 
\text{;}
\end{equation*}%
sending $r\downarrow 0$ yields the desired super-addivity of $\omega ^{\circ
}$;%
\begin{equation*}
\sum_{i}\omega ^{\circ }\left( R_{i}\right) \leq \omega ^{\circ }\left(
R\right) .
\end{equation*}
On the other hand, continuity of $f$ on $\left[ 0,T\right] ^{2}$ implies%
\begin{eqnarray*}
\left\vert f\left( A\right) \right\vert ^{p} &\leq &\left\vert f\left(
A^{\circ ,r}\right) \right\vert ^{p}+o\left( 1\right) \\
&\leq &\omega \left( A^{\circ ,r}\right) +o\left( 1\right) \text{ as }r\text{
}\downarrow 0
\end{eqnarray*}%
and hence $\left\vert f\left( A\right) \right\vert ^{p}\leq \omega ^{\circ
}\left( A\right) $, for any rectangle $A\subset \left[ 0,T\right] ^{2}$.
Using super-additivity of $\omega ^{\circ }$ immediately gives%
\begin{equation*}
\omega \left( R\right) \overset{\text{by def.}}{=}\sup_{\Pi \in \mathcal{P}%
\left( R\right) }\sum_{A\in \Pi }\left\vert f\left( A\right) \right\vert
^{p}\leq \omega ^{\circ }\left( R\right) ;
\end{equation*}%
together with (\ref{oCircLTo}) we thus have $\omega \left( R\right) =\omega
^{\circ }\left( R\right) $. Since $R$ was an arbitrary rectangle in $\left[
0,T\right] ^{2}$ inner continuity is proved.

\underline{Outer continuity:} We assume $A\subset \left( 0,T\right) ^{2}$
(i.e. $0<a_{1}\leq a_{2}<T,0<a_{3}\leq a_{4}<T)$ and take $r>0$ small enough
so that%
\begin{equation*}
\bar{A}^{r}=\left( 
\begin{array}{c}
a_{1}-r,a_{2}+r \\ 
a_{3}-r,a_{4}+r%
\end{array}%
\right) ;
\end{equation*}
the general case $A\subset \left[ 0,T\right] ^{2}$ is handled by a (trivial)
adaption of the argument for the remaining cases (i.e. $a_{1}=0$ or $a_{2}=T$
or $a_{3}=0$ or $a_{4}=T$). We first note that \label{individualise}%
\begin{eqnarray*}
\omega \left( \bar{A}^{r}\right) -\omega \left( A\right) &=&\omega \left( 
\begin{array}{c}
a_{1}-r,a_{2}+r \\ 
a_{3}-r,a_{4}+r%
\end{array}%
\right) -\omega \left( 
\begin{array}{c}
a_{1},a_{2} \\ 
a_{3},a_{4}%
\end{array}%
\right) \\
&\leq &\left\vert \omega \left( 
\begin{array}{c}
a_{1}-r,a_{2}+r \\ 
a_{3}-r,a_{4}+r%
\end{array}%
\right) -\omega \left( 
\begin{array}{c}
a_{1}-r,a_{2} \\ 
a_{3}-r,a_{4}+r%
\end{array}%
\right) \right\vert \\
&&+\left\vert \omega \left( 
\begin{array}{c}
a_{1}-r,a_{2} \\ 
a_{3}-r,a_{4}+r%
\end{array}%
\right) -\omega \left( 
\begin{array}{c}
a_{1},a_{2} \\ 
a_{3}-r,a_{4}+r%
\end{array}%
\right) \right\vert \\
&&+\left\vert \omega \left( 
\begin{array}{c}
a_{1},a_{2} \\ 
a_{3}-r,a_{4}+r%
\end{array}%
\right) -\omega \left( 
\begin{array}{c}
a_{1},a_{2} \\ 
a_{3},a_{4}+r%
\end{array}%
\right) \right\vert \\
&&+\left\vert \omega \left( 
\begin{array}{c}
a_{1},a_{2} \\ 
a_{3},a_{4}+r%
\end{array}%
\right) -\omega \left( 
\begin{array}{c}
a_{1},a_{2} \\ 
a_{3},a_{4}%
\end{array}%
\right) \right\vert
\end{eqnarray*}%
Now we use lemma \ref{almostSubAdditive}; with 
\begin{equation*}
\Delta :=\left\vert \omega \left( 
\begin{array}{c}
a_{1}-r,a_{2}+r \\ 
a_{3}-r,a_{4}+r%
\end{array}%
\right) -\omega \left( 
\begin{array}{c}
a_{1}-r,a_{2} \\ 
a_{3}-r,a_{4}+r%
\end{array}%
\right) \right\vert
\end{equation*}%
we have

\begin{eqnarray*}
\Delta &\leq &\omega \left( 
\begin{array}{c}
a_{2},a_{2}+r \\ 
a_{3}-r,a_{4}+r%
\end{array}%
\right) +c\omega \left( \left[ 0,T\right] ^{2}\right) ^{1-1/p}\omega \left( 
\begin{array}{c}
a_{2},a_{2}+r \\ 
a_{3}-r,a_{4}+r%
\end{array}%
\right) ^{1/p} \\
&\leq &\omega \left( 
\begin{array}{c}
a_{2},a_{2}+r \\ 
0,T%
\end{array}%
\right) +c\omega \left( \left[ 0,T\right] ^{2}\right) ^{1-1/p}\omega \left( 
\begin{array}{c}
a_{2},a_{2}+r \\ 
0,T%
\end{array}%
\right) ^{1/p},
\end{eqnarray*}%
\newline
and similar inequalities for the other three terms in our upper estimate on $%
\omega \left( \bar{A}^{r}\right) -\omega \left( A\right) $ above. So it only
remains to prove that for $a\in \left( 0,T\right) $

\begin{equation*}
\omega \left( 
\begin{array}{c}
a,a+r \\ 
0,T%
\end{array}%
\right) ,\text{ }\omega \left( 
\begin{array}{c}
a-r,a \\ 
0,T%
\end{array}%
\right) ,\text{ }\omega \left( 
\begin{array}{c}
0,T \\ 
a,a+r%
\end{array}%
\right) ,\text{ and }\omega \left( 
\begin{array}{c}
0,T \\ 
a-r,a%
\end{array}%
\right)
\end{equation*}%
\newline
converge to $0$ when $r$ tends to $0.$But this is easy; using super-addivity
of $\omega $ and inner-continuity we see that 
\begin{eqnarray*}
\omega \left( 
\begin{array}{c}
a,a+r \\ 
0,T%
\end{array}%
\right) &\leq &\omega \left( 
\begin{array}{c}
a,T \\ 
0,T%
\end{array}%
\right) -\omega \left( 
\begin{array}{c}
a+r,T \\ 
0,T%
\end{array}%
\right) \\
&\rightarrow &0\text{ as }r\downarrow 0\text{.}
\end{eqnarray*}%
Other expressions are handled similarly and our proof of outer continuity is
finished.

\section{Appendix}

\subsection{Young and Young-Towghi discrete inequalities}

\subsubsection{One dimensional case.}

Consider a dissection $D=\left( 0=t_{0},...,t_{n}=T\right) \in \mathcal{D}%
\left( \left[ 0,T\right] \right) .$ We define the "discrete integral"
between $x,y:\left[ 0,T\right] \rightarrow $ $\mathbb{R}$ as

\begin{equation*}
I^{D}=\int_{D}ydx=\sum_{i=1}^{n}y_{t_{i}}x_{t_{i-1},t_{i}}.
\end{equation*}

\begin{lemma}
\label{MainPointinYoung1D}\bigskip Let $p,q\geq 1,$ assume that $\theta
=1/p+1/q>1$. Assume $x,y:\left[ 0,T\right] \rightarrow $ $\mathbb{R}$ are
finite $p$- resp. $q$-variation. Then there exists $t_{i_{0}}\in D\backslash
\left\{ 0,T\right\} $ (equivalently: $i_{0}\in \left\{ 1,\dots ,n-1\right\} $%
) such that 
\begin{equation*}
\left\vert \int_{D}ydx-\int_{D\backslash \left\{ t_{i_{0}}\right\}
}ydx\right\vert \leq \frac{1}{\left( n-1\right) ^{\theta }}\left\vert
x\right\vert _{p\text{-var,}\left[ 0,T\right] }\left\vert y\right\vert _{q%
\text{-var,}\left[ 0,T\right] }
\end{equation*}
\end{lemma}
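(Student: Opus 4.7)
The plan has two ingredients: a discrete ``integration by parts'' identity capturing the effect of removing a single interior point $t_{i_0}$ from $D$, followed by a pigeonhole/averaging argument that locates an $i_0$ at which the resulting correction is small.

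First, I would write $\int_D y\,dx$ and $\int_{D\setminus\{t_{i_0}\}} y\,dx$ out explicitly; all contributions from indices $i \notin \{i_0, i_0+1\}$ cancel, and the effect of dropping $t_{i_0}$ is precisely to replace the two summands $y_{t_{i_0}} x_{t_{i_0-1},t_{i_0}} + y_{t_{i_0+1}} x_{t_{i_0},t_{i_0+1}}$ by the single summand $y_{t_{i_0+1}} x_{t_{i_0-1},t_{i_0+1}}$. Using additivity $x_{t_{i_0-1},t_{i_0+1}} = x_{t_{i_0-1},t_{i_0}} + x_{t_{i_0},t_{i_0+1}}$ then yields the clean identity
\[
\int_D y\,dx - \int_{D\setminus\{t_{i_0}\}} y\,dx \;=\; -\,y_{t_{i_0},t_{i_0+1}}\, x_{t_{i_0-1},t_{i_0}},
\]
reducing the task to locating $i_0 \in \{1,\ldots,n-1\}$ for which $|y_{t_{i_0},t_{i_0+1}}|\,|x_{t_{i_0-1},t_{i_0}}|$ is small.

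For the second step, I would normalise by setting $a_i := |x_{t_{i-1},t_i}|^p / |x|_{p\text{-var};[0,T]}^p$ and $b_i := |y_{t_i,t_{i+1}}|^q / |y|_{q\text{-var};[0,T]}^q$ for $i \in \{1,\ldots,n-1\}$; the very definition of $p$- and $q$-variation gives $\sum_i a_i \leq 1$ and $\sum_i b_i \leq 1$. Averaging $a_i + b_i$ over the $n-1$ interior indices then produces some $i_0$ with $a_{i_0} + b_{i_0} \leq 2/(n-1)$, hence both $a_{i_0}, b_{i_0} \leq 2/(n-1)$, and combining with the identity above gives
\[
|x_{t_{i_0-1},t_{i_0}}|\,|y_{t_{i_0},t_{i_0+1}}| = a_{i_0}^{1/p}\,b_{i_0}^{1/q}\,|x|_{p\text{-var}}\,|y|_{q\text{-var}} \leq \Big(\tfrac{2}{n-1}\Big)^{\theta}\,|x|_{p\text{-var}}\,|y|_{q\text{-var}}.
\]

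The main subtlety I anticipate is cosmetic: symmetric averaging produces the constant $2^\theta$ where the statement advertises a bare $1$. The sharper constant can be recovered either by replacing $a_i + b_i$ with an optimally weighted combination $\lambda a_i + (1-\lambda) b_i$ (optimum at $\lambda = q/(p+q)$), or by bounding $\sum_i a_i^{1/p} b_i^{1/q} \leq 1$ directly via H\"older, using that $\theta > 1$ is equivalent to $p' := p/(p-1) \geq q$, so that $b_i \leq 1$ entails $\sum b_i^{p'/q} \leq \sum b_i \leq 1$. In any event the estimate is of the correct form $O((n-1)^{-\theta})$, which is exactly what is needed to drive the standard iterative Young--Towghi argument by successively dropping interior points from $D$.
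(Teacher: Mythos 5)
Your telescoping identity is correct (modulo a sign, which the paper's own display also drops, and which is irrelevant under the absolute value), and your overall plan — reduce the effect of dropping $t_{i_0}$ to a single product $|y_{t_{i_0},t_{i_0+1}}|\,|x_{t_{i_0-1},t_{i_0}}|$ and then locate a small one by pigeonhole — is the paper's plan. The symmetric average of $a_i+b_i$ does give $(2/(n-1))^\theta$, which you rightly note is of the right order and enough for the downstream iteration; you also correctly flag that this misses the advertised constant $1$. The problem is that neither of your suggested repairs actually closes that gap. For the weighted combination $\lambda a_i+(1-\lambda)b_i$: pigeonhole gives some $i_0$ with $a_{i_0}\le(\lambda(n-1))^{-1}$ and $b_{i_0}\le((1-\lambda)(n-1))^{-1}$, hence $a_{i_0}^{1/p}b_{i_0}^{1/q}\le\lambda^{-1/p}(1-\lambda)^{-1/q}(n-1)^{-\theta}$; but the prefactor $\lambda^{-1/p}(1-\lambda)^{-1/q}$ is never $\le1$ (for $p=q$ its minimum, over $\lambda$, is exactly $2^\theta$, the constant you were trying to beat). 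For the H\"older route: from $\sum_i a_i^{1/p}b_i^{1/q}\le1$, averaging over $n-1$ indices only yields $\min_i a_i^{1/p}b_i^{1/q}\le 1/(n-1)$, which for $\theta>1$ is strictly weaker than $1/(n-1)^\theta$.

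The missing step — and what the paper actually does — is to take $1/\theta$-th powers \emph{before} invoking the pigeonhole. H\"older with conjugate exponents $p\theta$ and $q\theta$ (conjugate since $\tfrac{1}{p\theta}+\tfrac{1}{q\theta}=1$) gives
\begin{equation*}
\sum_{i=1}^{n-1}\bigl(a_i^{1/p}b_i^{1/q}\bigr)^{1/\theta}
=\sum_{i=1}^{n-1}a_i^{1/(p\theta)}b_i^{1/(q\theta)}
\le\Bigl(\sum_i a_i\Bigr)^{1/(p\theta)}\Bigl(\sum_i b_i\Bigr)^{1/(q\theta)}\le1,
\end{equation*}
or, equivalently, weighted AM--GM with weights $\tfrac{1}{p\theta},\,\tfrac{1}{q\theta}$ summing to $1$ gives $(a_i^{1/p}b_i^{1/q})^{1/\theta}\le\tfrac{1}{p\theta}a_i+\tfrac{1}{q\theta}b_i$, whose sum is again $\le1$. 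Now pigeonhole the $1/\theta$-th powers: the smallest of the $n-1$ quantities $(a_i^{1/p}b_i^{1/q})^{1/\theta}$ is at most $1/(n-1)$, hence $a_{i_0}^{1/p}b_{i_0}^{1/q}\le(n-1)^{-\theta}$ with constant exactly $1$. (This also explains your $\lambda=q/(p+q)$: it equals $1/(p\theta)$, the correct AM--GM weight, but the weight must be applied to the $1/\theta$-th power of the product, not to the product itself.)
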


Iterated removal of points in the dissection, using the above lemma, leads
immediately to Young's maximal inequality which is the heart of the Young's
integral construction.

\begin{theorem}[Young's Maximal Inequality]
Let $p,q\geq 1,$ assume that $\theta =1/p+1/q>1,$ and consider two paths $%
x,y $ from $\left[ 0,T\right] $ into $\mathbb{R}$ of finite $p$-variation
and $q$-variation, with $y_{0}=0.$ Then%
\begin{equation*}
\left\vert \int_{D}ydx\right\vert \leq \left( 1+\zeta \left( \theta \right)
\right) \left\vert x\right\vert _{p\text{-var;}\left[ 0,T\right] }\left\vert
y\right\vert _{q\text{-var;}\left[ 0,T\right] }
\end{equation*}%
and this estimate is uniform over all $D\in \mathcal{D}\left( \left[ 0,T%
\right] \right) $.
\end{theorem}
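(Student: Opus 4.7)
The plan is to deduce Young's maximal inequality by iterating Lemma \ref{MainPointinYoung1D}: starting from the given dissection $D$ with $n$ intervals, I will repeatedly strip out one interior point at a time until only the trivial dissection $\{0,T\}$ remains, controlling the error at each stage by the lemma and bounding the terminal integral using the hypothesis $y_0=0$.

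More concretely, let $D_0=D=(0=t_0<\cdots<t_n=T)$. Apply Lemma \ref{MainPointinYoung1D} to $D_0$ to find an interior point whose removal costs at most $(n-1)^{-\theta}|x|_{p\text{-var};[0,T]}|y|_{q\text{-var};[0,T]}$; call the resulting dissection $D_1$ (with $n-1$ intervals). Iterate: at step $k$ the current dissection $D_k$ has $n-k$ intervals, i.e.\ $n-k-1$ interior points, and the lemma yields a point whose removal changes the integral by at most
\begin{equation*}
\frac{1}{(n-k-1)^{\theta}}\,|x|_{p\text{-var};[0,T]}\,|y|_{q\text{-var};[0,T]},
\end{equation*}
where I use the monotonicity of $p$- and $q$-variation in the interval (the variation of $x$, $y$ restricted to the current sub-intervals is bounded by the variation over $[0,T]$). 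After $n-1$ such removals, the dissection shrinks to $D_{n-1}=\{0,T\}$, and
\begin{equation*}
\int_{D_{n-1}} y\,dx = y_T\,x_{0,T} = (y_T-y_0)(x_T-x_0),
\end{equation*}
using $y_0=0$. The trivial one-interval bound $|y_T-y_0|\leq|y|_{q\text{-var};[0,T]}$ and $|x_T-x_0|\leq|x|_{p\text{-var};[0,T]}$ then gives $|\int_{D_{n-1}}y\,dx|\leq |x|_{p\text{-var};[0,T]}|y|_{q\text{-var};[0,T]}$.

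Summing the telescoping estimate over the $n-1$ removal steps produces
\begin{equation*}
\Bigl|\int_D y\,dx\Bigr| \leq |x|_{p\text{-var};[0,T]}|y|_{q\text{-var};[0,T]}\Bigl(1+\sum_{k=1}^{n-1}\frac{1}{k^{\theta}}\Bigr) \leq (1+\zeta(\theta))\,|x|_{p\text{-var};[0,T]}|y|_{q\text{-var};[0,T]},
\end{equation*}
which is the claimed inequality. The bound is manifestly uniform in $D$, since neither $\zeta(\theta)$ nor the two variation norms depend on $D$.

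Since Lemma \ref{MainPointinYoung1D} does all the hard analytic work (the pigeonhole-plus-superadditivity argument that is the heart of Young's construction), the only real subtlety in executing this plan is bookkeeping: checking that after each removal the hypotheses of the lemma still apply (true, because a sub-dissection of $[0,T]$ is itself a dissection of $[0,T]$), and that the variation norms used at every step may safely be replaced by the global ones on $[0,T]$ so that a single geometric-series-type sum $\sum k^{-\theta}<\infty$ (requiring exactly $\theta>1$) controls the total error. There is no further obstacle.
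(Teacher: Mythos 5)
Your proof is correct and follows essentially the same route as the paper's: iterated application of Lemma \ref{MainPointinYoung1D} to strip interior points one at a time, summing the resulting costs $\sum_{k=1}^{n-1}k^{-\theta}\le\zeta(\theta)$, and then bounding the residual one-interval integral $y_Tx_{0,T}$ using $y_0=0$. The aside about ``monotonicity of the variation in the interval'' is unnecessary (the lemma already returns the bound on all of $[0,T]$, and you correctly note that a sub-dissection of $[0,T]$ is still a dissection of $[0,T]$), but it does not affect the argument.
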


\begin{proof}
Iterative removal of "$i_{0}$" gives, thanks to lemma \ref%
{MainPointinYoung1D}, 
\begin{eqnarray*}
\left\vert \int_{D}ydx-\int_{\left\{ 0,T\right\} }ydx\right\vert &\leq
&\sum_{n\geq 2}\frac{1}{\left( n-1\right) ^{\theta }}\left\vert x\right\vert
_{p\text{-var,}\left[ 0,T\right] }\left\vert y\right\vert _{q\text{-var,}%
\left[ 0,T\right] } \\
&\leq &\zeta \left( \theta \right) \left\vert x\right\vert _{p\text{-var,}%
\left[ 0,T\right] }\left\vert y\right\vert _{q\text{-var,}\left[ 0,T\right] }
\end{eqnarray*}%
Finally, $\int_{\left\{ 0,T\right\} }ydx=y_{T}x_{0,T}=y_{0,T}x_{0,T}$ since $%
y_{0,T}=y_{T}-y_{0}$ and $y_{0}=0$ and hence 
\begin{equation*}
\left\vert \int_{\left\{ 0,T\right\} }ydx\right\vert =\left\vert
y_{0,T}x_{0,T}\right\vert \leq \left\vert x\right\vert _{p\text{-var,}\left[
0,T\right] }\left\vert y\right\vert _{q\text{-var,}\left[ 0,T\right] }
\end{equation*}%
and we conclude with the triangle inequality.
\end{proof}

\begin{proof}
(\textbf{Lemma }\ref{MainPointinYoung1D}) Observe that, for any $t_{i}\in
D\backslash \left\{ 0,T\right\} $ with $1\leq i\leq n-1$ 
\begin{equation*}
I^{D}-I^{D\backslash \left\{ t_{i}\right\}
}=y_{t_{i},t_{i+1}}x_{t_{i-1},t_{i}}
\end{equation*}%
We pick $t_{i_{0}}$ to make this difference as small as possible:%
\begin{equation*}
\left\vert I^{D}-I^{D\backslash \left\{ t_{i_{0}}\right\} }\right\vert \leq
\left\vert I^{D}-I^{D\backslash \left\{ t_{i}\right\} }\right\vert \text{
for all }i\in \left\{ 1,\dots ,n-1\right\}
\end{equation*}
As an elementary consequence, we have%
\begin{equation*}
\left\vert I^{D}-I^{D\backslash \left\{ t_{i_{0}}\right\} }\right\vert ^{%
\frac{1}{\theta }}\leq \frac{1}{n-1}\sum_{i=1}^{n-1}\left\vert
I^{D}-I^{D\backslash \left\{ t_{i}\right\} }\right\vert ^{1/\theta }.
\end{equation*}

The plan is to get an estimate on $\sum_{i=1}^{n-1}\left\vert
I^{D}-I^{D\backslash \left\{ t_{i}\right\} }\right\vert ^{1/\theta }$
independent of $n$. In fact, we shall see that 
\begin{equation}
\sum_{i=1}^{n-1}\left\vert I^{D}-I^{D\backslash \left\{ t_{i}\right\}
}\right\vert ^{1/\theta }\leq \left\vert x\right\vert _{p\text{-var,}\left[
0,T\right] }^{1/\theta }\left\vert y\right\vert _{q\text{-var,}\left[ 0,T%
\right] }^{1/\theta }  \label{1DLemma}
\end{equation}
and the desired estimate 
\begin{equation*}
\left\vert I^{D}-I^{D\backslash \left\{ t_{i_{0}}\right\} }\right\vert \leq
\left( \frac{1}{n-1}\right) ^{\theta }\left\vert x\right\vert _{p\text{-var,}%
\left[ 0,T\right] }\left\vert y\right\vert _{q\text{-var,}\left[ 0,T\right] }
\end{equation*}%
follows. It remains to establish (\ref{1DLemma}); thanks to H\"{o}lder's
inequality, using $1/\left( q\theta \right) +1/\left( p\theta \right) =1,$%
\begin{eqnarray*}
\sum_{i=1}^{n-1}\left\vert I^{D}-I^{D\backslash \left\{ t_{i}\right\}
}\right\vert ^{1/\theta } &=&\left( \sum_{i=1}^{n-1}\left\vert
y_{t_{i},t_{i+1}}\right\vert ^{1/\theta }\left\vert
x_{t_{i-1},t_{i}}\right\vert ^{1/\theta }\right) ^{\theta } \\
&\leq &\left( \sum_{i=1}^{m-1}\left\vert y_{t_{i},t_{i+1}}\right\vert
^{q}\right) ^{\frac{1}{q\theta }}\left( \sum_{i=1}^{n-1}\left\vert
x_{t_{i-1},t_{i}}\right\vert ^{p}\right) ^{\frac{1}{p\theta }} \\
&\leq &\left\vert x\right\vert _{p\text{-var,}\left[ 0,T\right] }^{1/\theta
}\left\vert y\right\vert _{q\text{-var,}\left[ 0,T\right] }^{1/\theta }.
\end{eqnarray*}%
and we are done.
\end{proof}

\subsubsection{Young-Towghi maximal inequality\ (2D)\label%
{AppendixYoungTowghiMaxInequ}}

We now consider the two-dimensional case. To this end, fix two dissections $%
D=\left( 0=t_{0},...,t_{n}=T\right) $ and $D^{\prime }=\left(
0=t_{0}^{\prime },...,t_{m}^{\prime }=T\right) ,$and define the discrete
integral between $x,y:\left[ 0,T\right] ^{2}\rightarrow $ $\mathbb{R}$ as 
\begin{equation}
I^{D,D^{\prime }}=\int_{D\times D^{\prime }}ydx:=\sum_{i}\sum_{j}y\left( 
\begin{array}{c}
t_{i} \\ 
t_{j}^{\prime }%
\end{array}%
\right) x\left( 
\begin{array}{c}
t_{i-1},t_{i} \\ 
t_{j-1}^{\prime },t_{j}^{\prime }%
\end{array}%
\right) .  \label{IntDDprimeNotation}
\end{equation}

\begin{lemma}
\label{MainPointinYoung2D}Let $p,q\geq 1,$ assume that $\theta =1/p+1/q>1$.
Assume $x,y:\left[ 0,T\right] ^{2}\rightarrow $ $\mathbb{R}$ are finite $p$-
resp. $q$-variation. Then there exists $t_{i_{0}}\in D\backslash \left\{
0,T\right\} $ (equivalently: $i_{0}\in \left\{ 1,\dots ,n-1\right\} $ such
that for every $\alpha \in \left( 1,\theta \right) $, 
\begin{equation*}
\left\vert \int_{D\times D^{\prime }}dx-\int_{D\backslash \left\{
t_{i_{0}}\right\} \times D^{\prime }}ydx\right\vert \leq \left( \frac{1}{n-1}%
\right) ^{\alpha }\left( 1+\zeta \left( \frac{\theta }{\alpha }\right)
\right) ^{\alpha }V_{p}\left( x;\left[ 0,T\right] ^{2}\right) V_{q}\left( y;%
\left[ 0,T\right] ^{2}\right)
\end{equation*}
\end{lemma}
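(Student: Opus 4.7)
The proof will follow the same template as the one-dimensional Lemma \ref{MainPointinYoung1D}: compute the cost of removing a single point, minimize it over $i$, and bound the average of all removal costs by a product of $p$- and $q$-variations via Hölder and super-additivity. The new ingredient is that in 2D the one-point-removal cost is itself a one-dimensional Young-type sum in the orthogonal variable, which we bound by 1D Young's maximal inequality applied with the \emph{shifted} exponents $p\alpha$ and $q\alpha$ (so that $\frac{1}{p\alpha}+\frac{1}{q\alpha}=\theta/\alpha>1$, giving the factor $\zeta(\theta/\alpha)$).

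The first step is a direct computation: using additivity of rectangular increments $x\bigl(\begin{smallmatrix}t_{i-1},t_{i+1}\\ t'_{j-1},t'_j\end{smallmatrix}\bigr)=x\bigl(\begin{smallmatrix}t_{i-1},t_i\\ \cdot\end{smallmatrix}\bigr)+x\bigl(\begin{smallmatrix}t_i,t_{i+1}\\ \cdot\end{smallmatrix}\bigr)$, one gets
\[
I^{D,D'}-I^{D\setminus\{t_i\},D'}
=-\sum_{j}\bigl[y(t_{i+1},t'_j)-y(t_i,t'_j)\bigr]\,x\!\begin{pmatrix}t_{i-1},t_i\\ t'_{j-1},t'_j\end{pmatrix}.
\]
This is a discrete one-dimensional integral $-\int_{D'}G_i\,dF_i$ with integrand $G_i(u)=y(t_{i+1},u)-y(t_i,u)$ and integrator $F_i(u)=x\bigl(\begin{smallmatrix}t_{i-1},t_i\\ 0,u\end{smallmatrix}\bigr)$. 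The function $\tilde G_i(u):=G_i(u)-G_i(0)=y\bigl(\begin{smallmatrix}t_i,t_{i+1}\\ 0,u\end{smallmatrix}\bigr)$ vanishes at $0$ and has the same increments as $G_i$; its 1D $q$-variation is controlled by $V_q(y;[t_i,t_{i+1}]\times[0,T])$, while the 1D $p$-variation of $F_i$ equals $V_p(x;[t_{i-1},t_i]\times[0,T])$ (the boundary term coming from $G_i(0)\neq 0$ is a single rectangular increment and is absorbed in the same bound).

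Next, I apply the 1D Young maximal inequality to $\int_{D'}\tilde G_i\,dF_i$ using the 1D-variation exponents $p\alpha$ and $q\alpha$, which are conjugate to an effective Young parameter $\theta/\alpha=1/(p\alpha)+1/(q\alpha)>1$; together with the monotonicity $|\cdot|_{p\alpha\text{-var}}\le|\cdot|_{p\text{-var}}$, this yields
\[
\bigl|I^{D,D'}-I^{D\setminus\{t_i\},D'}\bigr|
\;\le\;\bigl(1+\zeta(\theta/\alpha)\bigr)\,V_p\bigl(x;[t_{i-1},t_i]\times[0,T]\bigr)\,V_q\bigl(y;[t_i,t_{i+1}]\times[0,T]\bigr).
\]
Now, exactly as in the 1D proof, I pick $t_{i_0}$ minimizing the left-hand side and use the averaging inequality
\[
\bigl|I^{D,D'}-I^{D\setminus\{t_{i_0}\},D'}\bigr|^{1/\alpha}
\;\le\;\frac{1}{n-1}\sum_{i=1}^{n-1}\bigl|I^{D,D'}-I^{D\setminus\{t_i\},D'}\bigr|^{1/\alpha}.
\]
It remains to bound the right-hand sum by $(1+\zeta(\theta/\alpha))V_p(x;[0,T]^2)^{1/\alpha}V_q(y;[0,T]^2)^{1/\alpha}$; raising the resulting inequality to the power $\alpha$ then delivers the claim. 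This is done by Hölder's inequality with conjugate exponents $r,s$ chosen so that $r\ge p\alpha$ and $s\ge q\alpha$ (the freedom in choosing such a conjugate pair is precisely what the hypothesis $\alpha<\theta$ buys: the constraint reads $\theta/\alpha\ge 1$), combined with the (one-direction) super-additivity of the grid-like variations, i.e.\ $\sum_i V_p(\text{strip}_i)^p\le V_p(x;[0,T]^2)^p$ and the analogue for $V_q$.

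The main obstacle, as already hinted at, is the last step: verifying super-additivity of the strip variations $V_p(\text{strip}_i)^p$ (which follows by combining near-optimal strip dissections into a single grid-like partition of $[0,T]^2$, with some care because refinement can \emph{a priori} decrease a $p$-sum when $p>1$) and ensuring that the Hölder-plus-super-additivity step is carried out with exponents $r\ge p\alpha$, $s\ge q\alpha$, $1/r+1/s=1$; the strict inequality $\alpha<\theta$ is needed both to keep $\zeta(\theta/\alpha)$ finite and to guarantee that such a conjugate pair exists.
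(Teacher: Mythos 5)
Your first two steps (identifying the one-point-removal cost as a one-dimensional discrete integral in the $D'$-variable, and the averaging over $i$ after choosing the minimizing $i_0$) match the paper. The fatal problem is the step you yourself flag as ``the main obstacle'': the strip super-additivity $\sum_i V_p\left(x;[t_{i-1},t_i]\times [0,T]\right)^p\leq V_p\left(x;[0,T]^2\right)^p$ (equivalently, for the $1$D variations of the strip functions $F_i$) is not a delicate point requiring care --- it is \emph{false} for $p>1$. Different strips may demand incompatible vertical dissections, and merging near-optimal strip dissections into a single common grid $D\times D''$ forces refinements that strictly decrease $p$-sums (if $x(A)=x(A_1)+x(A_2)$ with same-sign pieces, then $|x(A)|^p\geq |x(A_1)|^p+|x(A_2)|^p$). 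A concrete counterexample: on $[0,2]^2$ let $x(s,t)=a(s)g_1(t)+b(s)g_2(t)$ with $a,b$ piecewise linear taking values $0,1,1$ resp.\ $0,0,1$ at $s=0,1,2$, and $g_1(t)=t$, $g_2(t)=1-|t-1|$. For $p=2$ the strip $[0,1]\times[0,2]$ contributes $4$ (optimal vertical dissection $\{0,2\}$) and the strip $[1,2]\times[0,2]$ contributes $2$ (optimal vertical dissection $\{0,1,2\}$), so the strip sum is $6$, whereas no single grid yields more than $4$, i.e.\ $V_2\left(x;[0,2]^2\right)^2=4$. The failure of super-additivity of $R\mapsto V_p(\cdot;R)^p$ is of course the central theme of this paper (cf.\ Example 2 for the fBM covariance), so it cannot be invoked, even for strip decompositions.

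The paper's proof avoids ever leaving the common grid $D\times D'$. Instead of bounding each removal cost $\left|I^{D,D'}-I^{D\setminus\{t_i\},D'}\right|$ separately by a $1$D Young inequality on its own strip (which inevitably produces the incompatible per-strip variations above), it aggregates first, setting $\Delta^{D,D'}=\sum_{i}\left|I^{D,D'}-I^{D\setminus\{t_i\},D'}\right|^{1/\alpha}$, and then runs a \emph{second} point-removal induction, in the $D'$-direction, on $\Delta^{D,D'}$ itself. Removing $t'_j$ changes $\Delta^{D,D'}$ by at most $X_j^{1/\alpha}Y_j^{1/\alpha}$ with $X_j^p=\sum_i\left|x\left([t_{i-1},t_i]\times[t'_{j-1},t'_j]\right)\right|^p$ and analogously for $Y_j$; after choosing the best $j_0$ and applying the geometric--arithmetic mean inequality, the quantities $\sum_j X_j^p$ and $\sum_j Y_j^q$ are genuine grid-like sums over $D\times D'$ and hence bounded by $V_p(x)^p$ and $V_q(y)^q$ directly from the definition of $V_p$, $V_q$. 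That is where the constant $\left(1+\zeta(\theta/\alpha)\right)^{\alpha}$ actually comes from; your route to the same constant via a per-strip $1$D Young inequality cannot be completed.
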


Iterative removal of "$i_{0}$" leads to Young-Towghi's maximal inequality.

\begin{theorem}[Young-Towghi Maximal Inequality]
\label{YTMI}Let $p,q\geq 1,$ assume that $\theta =1/p+1/q>1,$ and consider $%
x,y:\left[ 0,T\right] ^{2}\rightarrow $ $\mathbb{R}$ of finite $p$- resp. $q$%
-variation and $y\left( 0,\cdot \right) =y\left( \cdot ,0\right) =0$. Then,
for every $\alpha \in \left( 1,\theta \right) $,%
\begin{equation*}
\left\vert \int_{D\times D^{\prime }}ydx\right\vert \leq \left[ \left(
1+\zeta \left( \frac{\theta }{\alpha }\right) \right) ^{\alpha }\zeta \left(
\alpha \right) +\left( 1+\zeta \left( \theta \right) \right) \right]
V_{p}\left( x;\left[ 0,T\right] ^{2}\right) V_{q}\left( y;\left[ 0,T\right]
^{2}\right)
\end{equation*}%
and this estimate is uniform over all $D,D^{\prime }\in \mathcal{D}\left( %
\left[ 0,T\right] \right) $
\end{theorem}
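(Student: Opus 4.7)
The strategy mirrors the one-dimensional proof of Young's maximal inequality. Starting from $I^{D,D'}$ we repeatedly invoke Lemma~\ref{MainPointinYoung2D} to remove, one at a time, the cheapest interior point $t_{i_{0}}$ from the first dissection, until only the endpoints $\{0,T\}$ remain in that slot. This reduces the problem to estimating a ``boundary'' term $I^{\{0,T\},D'}$, which I will control by a second application of the one-dimensional Young maximal inequality, this time along the $D'$-axis. The total bound is then assembled by the triangle inequality.

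For the iteration, if the current first dissection has $n+1$ points (so $n-1$ interior points), Lemma~\ref{MainPointinYoung2D} provides a point whose deletion incurs an error at most
\[
\frac{1}{(n-1)^{\alpha}}\bigl(1+\zeta(\theta/\alpha)\bigr)^{\alpha}V_{p}(x;[0,T]^{2})\,V_{q}(y;[0,T]^{2}).
\]
Summing the telescoping errors accumulated while reducing $D$ down to $\{0,T\}$, and bounding the sum by $\sum_{k\ge 1}k^{-\alpha}=\zeta(\alpha)$ (finite because $\alpha>1$), gives
\[
\bigl|I^{D,D'}-I^{\{0,T\},D'}\bigr|\le\zeta(\alpha)\bigl(1+\zeta(\theta/\alpha)\bigr)^{\alpha}V_{p}(x;[0,T]^{2})\,V_{q}(y;[0,T]^{2}),
\]
which accounts for the first summand in the bracket of the claimed inequality.

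The remaining term, with $D=\{0,T\}$, reads $I^{\{0,T\},D'}=\sum_{j}y(T,t'_{j})\,x\!\left(\begin{array}{c}0,T\\ t'_{j-1},t'_{j}\end{array}\right)$. The crux of the boundary step is to recognise this as a genuine 1D Young sum. Set $\hat y(t):=y(T,t)$ and $\hat x(t):=x(T,t)-x(0,t)$. Since $y(\cdot,0)=y(0,\cdot)=0$, the increment $\hat y(t)-\hat y(s)$ equals the rectangular increment of $y$ over the strip $[0,T]\times[s,t]$, so that for any dissection $(t'_{j})$ of $[0,T]$ the grid-like partition of $[0,T]^{2}$ based on $\{0,T\}\times(t'_{j})$ yields $\sum_{j}|\hat y(t'_{j})-\hat y(t'_{j-1})|^{q}\le V_{q}(y;[0,T]^{2})^{q}$, and analogously for $\hat x$ with exponent $p$. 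Moreover $\hat y(0)=y(T,0)=0$. Hence the identity $I^{\{0,T\},D'}=\sum_{j}\hat y(t'_{j})\bigl(\hat x(t'_{j})-\hat x(t'_{j-1})\bigr)$ lets us apply the 1D Young maximal inequality (valid since $1/p+1/q=\theta>1$), yielding
\[
|I^{\{0,T\},D'}|\le\bigl(1+\zeta(\theta)\bigr)V_{p}(x;[0,T]^{2})\,V_{q}(y;[0,T]^{2}).
\]
Combining the two estimates by the triangle inequality produces the stated bound, uniformly in $D,D'$. The only delicate point is this last comparison of 1D with 2D variations via strip-increments; everything else is bookkeeping and direct reuse of the 1D template.
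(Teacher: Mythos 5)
Your argument reproduces the paper's proof in both structure and substance: iterate Lemma~\ref{MainPointinYoung2D} to strip $D$ down to $\{0,T\}$ at total cost $\zeta(\alpha)(1+\zeta(\theta/\alpha))^{\alpha}V_p V_q$, then recognise the leftover $I^{\{0,T\},D'}$ as a one-dimensional Young sum for the paths $\hat y(\cdot)=y(T,\cdot)$ and $\hat x(\cdot)=x(T,\cdot)-x(0,\cdot)$ (whose increments are strip rectangular increments, hence controlled by the $2$D variations), and finish with the $1$D Young maximal inequality and the triangle inequality. This is exactly the paper's route; your writeup is merely a little more explicit about the identification of the boundary term with a $1$D integral.
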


\begin{proof}
Iterative removal of "$i_{0}$" gives 
\begin{eqnarray*}
\left\vert \int_{D\times D^{\prime }}ydx-\int_{\left\{ 0,T\right\} \times
D^{\prime }}ydx\right\vert &\leq &\sum_{n\geq 2}\left( \frac{1}{n-1}\right)
^{\alpha }\left( 1+\zeta \left( \frac{\theta }{\alpha }\right) \right)
^{\alpha }V_{p}\left( x;\left[ 0,T\right] ^{2}\right) V_{q}\left( y;\left[
0,T\right] ^{2}\right) \\
&\leq &\zeta \left( \alpha \right) \left( 1+\zeta \left( \frac{\theta }{%
\alpha }\right) \right) ^{\alpha }V_{p}\left( x;\left[ 0,T\right]
^{2}\right) V_{q}\left( y;\left[ 0,T\right] ^{2}\right) .
\end{eqnarray*}%
It only remains to bound%
\begin{equation*}
\int_{\left\{ 0,T\right\} \times D^{\prime }}ydx=\sum_{j}y\left( 
\begin{array}{c}
T \\ 
t_{j}^{\prime }%
\end{array}%
\right) x\left( 
\begin{array}{c}
0,T \\ 
t_{j-1}^{\prime },t_{j}^{\prime }%
\end{array}%
\right) =\int_{D^{\prime }}y\left( 
\begin{array}{c}
0,T \\ 
.%
\end{array}%
\right) dx\left( 
\begin{array}{c}
0,T \\ 
.%
\end{array}%
\right)
\end{equation*}%
where we used $y\left( 
\begin{array}{c}
0 \\ 
\cdot%
\end{array}%
\right) =0$ in the last equality. From Young's 1D maximal inequality, we have%
\begin{eqnarray*}
\left\vert \int_{\left\{ 0,T\right\} \times D^{\prime }}ydx\right\vert &\leq
&\left( 1+\zeta \left( \theta \right) \right) \left\vert y\left( 
\begin{array}{c}
0,T \\ 
0,.%
\end{array}%
\right) \right\vert _{q\text{-var,}\left[ 0,T\right] }\left\vert x\left( 
\begin{array}{c}
0,T \\ 
0,.%
\end{array}%
\right) \right\vert _{p\text{-var,}\left[ 0,T\right] } \\
&\leq &\left( 1+\zeta \left( \theta \right) \right) V_{p}\left( x;\left[ 0,T%
\right] ^{2}\right) V_{q}\left( y;\left[ 0,T\right] ^{2}\right)
\end{eqnarray*}%
The triangle inequality allows us to conclude.
\end{proof}

\begin{proof}
(\textbf{Lemma }\ref{MainPointinYoung2D}) Observe that, for any $t_{i}\in
D\backslash \left\{ 0,T\right\} $ 
\begin{eqnarray*}
I^{D,D^{\prime }}-I^{D\backslash \left\{ t_{i}\right\} ,D^{\prime }}
&=&\int_{D^{\prime }}y\left( 
\begin{array}{c}
t_{i},t_{i+1} \\ 
\cdot%
\end{array}%
\right) x\left( 
\begin{array}{c}
t_{i-1},t_{i} \\ 
\cdot%
\end{array}%
\right) \\
&=&\int_{D^{\prime }}y\left( 
\begin{array}{c}
t_{i},t_{i+1} \\ 
0,\cdot%
\end{array}%
\right) x\left( 
\begin{array}{c}
t_{i-1},t_{i} \\ 
\cdot%
\end{array}%
\right)
\end{eqnarray*}%
where we used $y\left( 
\begin{array}{c}
\cdot \\ 
0%
\end{array}%
\right) =0$. We pick $t_{i_{0}}$ to make this difference as small as
possible:%
\begin{equation*}
\left\vert I^{D,D^{\prime }}-I^{D\backslash \left\{ t_{i_{0}}\right\}
,D^{\prime }}\right\vert \leq \left\vert I^{D,D^{\prime }}-I^{D\backslash
\left\{ t_{i}\right\} ,D^{\prime }}\right\vert \text{ for all }i\in \left\{
1,\dots ,n-1\right\}
\end{equation*}%
As an elementary consequence, 
\begin{equation}
\left\vert I^{D,D^{\prime }}-I^{D\backslash \left\{ t_{i_{0}}\right\}
,D^{\prime }}\right\vert ^{1/\alpha }\leq \frac{1}{n-1}\sum_{i=1}^{n-1}\left%
\vert I^{D,D^{\prime }}-I^{D\backslash \left\{ t_{i}\right\} ,D^{\prime
}}\right\vert ^{1/\alpha }.  \label{BoundQ0o}
\end{equation}%
The plan is to get an estimate on $\sum_{i=1}^{n-1}\left\vert I^{D,D^{\prime
}}-I^{D\backslash \left\{ t_{i}\right\} ,D^{\prime }}\right\vert ^{1/\alpha
} $ independent of $n$ and uniformly in $D^{\prime }\in \mathcal{D}\left( %
\left[ 0,T\right] \right) $; in fact, we shall see that 
\begin{equation}
\Delta ^{D,D^{\prime }}:=\sum_{i=1}^{n-1}\left\vert I^{D,D^{\prime
}}-I^{D\backslash \left\{ t_{i}\right\} ,D^{\prime }}\right\vert ^{1/\alpha
}\leq cV_{p}\left( x;\left[ 0,T\right] ^{2}\right) ^{1/\alpha }V_{q}\left( y;%
\left[ 0,T\right] ^{2}\right) ^{1/\alpha }  \label{2DLemma}
\end{equation}%
with $c=1+\zeta \left( \frac{\theta }{\alpha }\right) $ and the desired
estimate 
\begin{equation*}
\left\vert I^{D}-I^{D\backslash \left\{ t_{i_{0}}\right\} }\right\vert \leq
\left( \frac{c}{n-1}\right) ^{\alpha }V_{p}\left( x;\left[ 0,T\right]
^{2}\right) V_{q}\left( y;\left[ 0,T\right] ^{2}\right)
\end{equation*}%
follows. It remains to establish (\ref{2DLemma}); to this end we consider
the removal of $t_{j}^{\prime }\in D^{\prime }\backslash \left\{ 0,T\right\} 
$ from $D^{\prime }$ and note that 
\begin{equation*}
\left( I^{D,D^{\prime }}-I^{D\backslash \left\{ t_{i}\right\} ,D^{\prime
}}\right) -\left( I^{D,D^{\prime }\backslash \left\{ t_{j}^{\prime }\right\}
}-I^{D\backslash \left\{ t_{i}\right\} ,D^{\prime }\backslash \left\{
t_{j}^{\prime }\right\} }\right) =y\left( 
\begin{array}{c}
t_{i},t_{i+1} \\ 
t_{j}^{\prime },t_{j+1}^{\prime }%
\end{array}%
\right) x\left( 
\begin{array}{c}
t_{i-1},t_{i} \\ 
t_{j-1}^{\prime },t_{j}^{\prime }%
\end{array}%
\right)
\end{equation*}%
Using the elementary inequality $\left\vert a\right\vert ^{1/\alpha
}-\left\vert b\right\vert ^{1/\alpha }\leq \left\vert a-b\right\vert
^{1/\alpha }$ valid for $a,b\in \mathbb{R}$ and $\alpha \geq 1$ we have%
\begin{equation*}
\left. 
\begin{array}{l}
\left\vert I^{D,D^{\prime }}-I^{D\backslash \left\{ t_{i}\right\} ,D^{\prime
}}\right\vert ^{1/\alpha }-\left\vert I^{D,D^{\prime }\backslash \left\{
t_{j}^{\prime }\right\} }-I^{D\backslash \left\{ t_{i}\right\} ,D^{\prime
}\backslash \left\{ t_{j}^{\prime }\right\} }\right\vert ^{1/\alpha } \\ 
\text{\text{ }}\leq \left\vert \left( I^{D,D^{\prime }}-I^{D\backslash
\left\{ t_{i}\right\} ,D^{\prime }}\right) -\left( I^{D,D^{\prime
}\backslash \left\{ t_{j}^{\prime }\right\} }-I^{D\backslash \left\{
t_{i}\right\} ,D^{\prime }\backslash \left\{ t_{j}^{\prime }\right\}
}\right) \right\vert ^{1/\alpha }.%
\end{array}%
\right.
\end{equation*}%
Hence, summing over $i$, we get 
\begin{eqnarray}
&&\Delta ^{D,D^{\prime }}-\Delta ^{D,D^{\prime }\backslash \left\{
t_{j}^{\prime }\right\} }  \notag \\
&\leq &\sum_{i=1}^{n-1}\left\vert \left( I^{D,D^{\prime }}-I^{D\backslash
\left\{ t_{i}\right\} ,D^{\prime }}\right) -\left( I^{D,D^{\prime
}\backslash \left\{ t_{j}^{\prime }\right\} }-I^{D\backslash \left\{
t_{i}\right\} ,D^{\prime }\backslash \left\{ t_{j}^{\prime }\right\}
}\right) \right\vert ^{1/\alpha }  \notag \\
&=&\sum_{i=1}^{n-1}\left\vert y\left( 
\begin{array}{c}
t_{i},t_{i+1} \\ 
t_{j}^{\prime },t_{j+1}^{\prime }%
\end{array}%
\right) \right\vert ^{1/\alpha }\left\vert x\left( 
\begin{array}{c}
t_{i-1},t_{i} \\ 
t_{j-1}^{\prime },t_{j}^{\prime }%
\end{array}%
\right) \right\vert ^{1/\alpha }  \label{long} \\
&\leq &\left( \sum_{i=1}^{n-1}\left\vert y\left( 
\begin{array}{c}
t_{i},t_{i+1} \\ 
t_{j}^{\prime },t_{j+1}^{\prime }%
\end{array}%
\right) \right\vert ^{\theta q/\alpha }\right) ^{\frac{1}{\theta q}}\left(
\sum_{i=1}^{n-1}\left\vert x\left( 
\begin{array}{c}
t_{i-1},t_{i} \\ 
t_{j-1}^{\prime },t_{j}^{\prime }%
\end{array}%
\right) \right\vert ^{\theta p/\alpha }\right) ^{\frac{1}{\theta p}}  \notag
\\
&\leq &\left( \sum_{i=1}^{n-1}\left\vert y\left( 
\begin{array}{c}
t_{i},t_{i+1} \\ 
t_{j}^{\prime },t_{j+1}^{\prime }%
\end{array}%
\right) \right\vert ^{q}\right) ^{\frac{1}{\alpha q}}\left(
\sum_{i=1}^{n-1}\left\vert x\left( 
\begin{array}{c}
t_{i-1},t_{i} \\ 
t_{j-1}^{\prime },t_{j}^{\prime }%
\end{array}%
\right) \right\vert ^{p}\right) ^{\frac{1}{\alpha p}};  \notag
\end{eqnarray}%
in the last step we used that the $\ell ^{\theta p/\alpha }$ norm on $%
\mathbb{R}^{n-1}$ is dominated by the $\ell ^{p}$ norm (because $\theta
p/\alpha >p$). It follows that%
\begin{equation}
\Delta ^{D,D^{\prime }}-\Delta ^{D,D^{\prime }\backslash \left\{
t_{j}^{\prime }\right\} }\leq Y_{j}^{1/\alpha }X_{j}^{1/\alpha }
\label{DeltaYX}
\end{equation}%
where%
\begin{equation*}
Y_{j}:=\left( \sum_{i=1}^{n-1}\left\vert y\left( 
\begin{array}{c}
t_{i},t_{i+1} \\ 
t_{j}^{\prime },t_{j+1}^{\prime }%
\end{array}%
\right) \right\vert ^{q}\right) ^{\frac{1}{q}},\,\,X_{j}:=\left(
\sum_{i=1}^{n-1}\left\vert x\left( 
\begin{array}{c}
t_{i-1},t_{i} \\ 
t_{j-1}^{\prime },t_{j}^{\prime }%
\end{array}%
\right) \right\vert ^{p}\right) ^{\frac{1}{p}}
\end{equation*}%
We pick $t_{j_{0}}^{\prime }\in D^{\prime }\backslash \left\{ 0,T\right\} $
(i.e. $1\leq j_{0}\leq m-1$) to make this difference as small as possible,%
\begin{equation*}
\Delta ^{D,D^{\prime }}-\Delta ^{D,D^{\prime }\backslash \left\{
t_{j_{0}}^{\prime }\right\} }\leq \Delta ^{D,D^{\prime }}-\Delta
^{D,D^{\prime }\backslash \left\{ t_{j}^{\prime }\right\} }\text{ for all }%
j\in \left\{ 1,\dots ,m-1\right\} ;
\end{equation*}
we shall see below that%
\begin{equation}
\left\vert \Delta ^{D,D^{\prime }}-\Delta ^{D,D^{\prime }\backslash \left\{
t_{j_{0}}^{\prime }\right\} }\right\vert \leq \left( \frac{1}{m-1}\right) ^{%
\frac{\theta }{\alpha }}V_{p}\left( x;\left[ 0,T\right] ^{2}\right)
^{1/\alpha }V_{q}\left( y;\left[ 0,T\right] ^{2}\right) ^{1/\alpha };
\label{stilltodo}
\end{equation}%
iterated removal of "$j_{0}$" yields%
\begin{equation*}
\Delta ^{D,D^{\prime }}\leq \Delta ^{D,\left\{ 0,T\right\} }+\zeta \left( 
\frac{\theta }{\alpha }\right) V_{p}\left( x,\left[ 0,T\right] ^{2}\right)
^{1/\alpha }V_{q}\left( y,\left[ 0,T\right] ^{2}\right) ^{1/\alpha };
\end{equation*}%
as in (\ref{long}) we estimate 
\begin{equation*}
\Delta ^{D,\left\{ 0,T\right\} }=\sum_{i=1}^{n-1}\left\vert y\left( 
\begin{array}{c}
t_{i},t_{i+1} \\ 
0,T%
\end{array}%
\right) x\left( 
\begin{array}{c}
t_{i-1},t_{i} \\ 
0,T%
\end{array}%
\right) \right\vert ^{1/\alpha }\leq \dots \leq V_{p}\left( x,\left[ 0,T%
\right] ^{2}\right) ^{1/\alpha }V_{q}\left( y,\left[ 0,T\right] ^{2}\right)
^{1/\alpha }
\end{equation*}%
and (\ref{2DLemma}) follows, as desired. The only thing left is to establish
(\ref{stilltodo}). Using (\ref{DeltaYX}) we can write%
\begin{eqnarray*}
\Delta ^{D,D^{\prime }}-\Delta ^{D,D^{\prime }\backslash \left\{
t_{j_{0}}^{\prime }\right\} } &\leq &\left( \prod_{j=1}^{m-1}\Delta
^{D,D^{\prime }}-\Delta ^{D,D^{\prime }\backslash \left\{ t_{j}^{\prime
}\right\} }\right) ^{\frac{1}{m-1}} \\
&\leq &\left( \prod_{j=1}^{m-1}X_{j}^{1/.\alpha }Y_{j}^{1/\alpha }\right) ^{%
\frac{1}{m-1}} \\
&=&\left( \prod_{j=1}^{m-1}X_{j}^{p}\right) ^{\frac{1}{m-1}\frac{1}{\alpha p}%
}\left( \prod_{j=1}^{m-1}Y_{j}^{q}\right) ^{\frac{1}{m-1}\frac{1}{\alpha q}}.
\end{eqnarray*}

Using the geometric/arithmetic inequality, we obtain%
\begin{eqnarray*}
\left( \prod_{j=1}^{m-1}X_{j}^{p}\right) ^{\frac{1}{m-1}\frac{1}{\alpha p}}
&\leq &\left( \frac{1}{m-1}\sum_{j=1}^{m-1}X_{j}^{p}\right) ^{\frac{1}{%
\alpha p}} \\
&\leq &\left( \frac{1}{m-1}\right) ^{\frac{1}{\alpha p}}\left(
\sum_{j=1}^{m-1}\sum_{i=1}^{n-1}\left\vert x\left( 
\begin{array}{c}
t_{i-1},t_{i} \\ 
t_{j-1}^{\prime },t_{j}^{\prime }%
\end{array}%
\right) \right\vert ^{p}\right) ^{\frac{1}{\alpha p}} \\
&\leq &\left( \frac{1}{m-1}\right) ^{\frac{1}{\alpha p}}V_{p}\left( x,\left[
0,T\right] ^{2}\right) ^{1/\alpha }.
\end{eqnarray*}%
and, similarly, 
\begin{equation*}
\left( \prod_{j=1}^{m-1}Y_{j}^{q}\right) ^{\frac{1}{m-1}\frac{1}{\alpha q}%
}\leq \left( \frac{1}{m-1}\right) ^{\frac{1}{\alpha q}}V_{q}\left( y,\left[
0,T\right] ^{2}\right) ^{1/\alpha }.
\end{equation*}

Using $\frac{1}{\alpha p}+\frac{1}{\alpha q}=\frac{\theta }{\alpha }$, we
thus arrive at%
\begin{equation*}
\Delta ^{D,D^{\prime }}-\Delta ^{D,D^{\prime }\backslash \left\{
t_{j_{0}}^{\prime }\right\} }\leq \left( \frac{1}{m-1}\right) ^{\frac{\theta 
}{\alpha }}V_{p}\left( x,\left[ 0,T\right] ^{2}\right) ^{1/\alpha
}V_{q}\left( y,\left[ 0,T\right] ^{2}\right) ^{1/\alpha }
\end{equation*}%
which is precisely the claimed estimate (\ref{stilltodo}).
\end{proof}

\end{document}